\documentclass[preprint,12pt]{elsarticle}    

\setlength{\topmargin}{-.5in}
\setlength{\textheight}{9in}
\setlength{\oddsidemargin}{.125in}
\setlength{\textwidth}{6.25in}

\usepackage{subcaption}
\usepackage{graphicx,epsfig}
\usepackage{enumerate}
\usepackage{amssymb}
\usepackage{amsthm}
\usepackage{amsmath}
\usepackage{centernot}
\usepackage{xcolor}
\usepackage{todonotes}
\usepackage{times}
\usepackage{mathrsfs}
\usepackage[utf8]{inputenc}

\usepackage{mathtools}
\usepackage{appendix}

\usepackage{fancyhdr}
\usepackage[ruled,vlined,resetcount,algosection]{algorithm2e}
\include{pythonlisting}

\SetCommentSty{mycommfont}
\usepackage{algpseudocode}
\usepackage{makecell}

\newtheorem{Remark}{Remark}
\numberwithin{Remark}{section}
\newtheorem{Lemma}{Lemma}
\numberwithin{Lemma}{section}
\newtheorem{Definition}{Definition}
\numberwithin{Definition}{section}
\newtheorem{Proposition}{Proposition}
\numberwithin{Proposition}{section}
\newtheorem{Corollary}{Corollary}
\numberwithin{Corollary}{section}
\newtheorem{Theorem}{Theorem}
\numberwithin{Theorem}{section}
\newtheorem{Algorithm}{Algorithm}
\numberwithin{Algorithm}{section}

\theoremstyle{definition}

\newtheorem{Problem}{Problem}
\numberwithin{Problem}{section}
\newsavebox\myboxA
\newsavebox\myboxB
\newlength\mylenA

\usepackage{amsmath}
\usepackage{wrapfig}
\usepackage{setspace}
\usepackage{booktabs}
\usepackage{lscape}
\usepackage{bm}
\usepackage{graphicx,epsfig}

\usepackage{xfrac}
\usepackage{url}
\usepackage{multirow}

\makeatletter
\newcommand{\thickhline}{%
    \noalign {\ifnum 0=`}\fi \hrule height 1pt
    \futurelet \reserved@a \@xhline
}
\numberwithin{equation}{section}

\begin{document}

\begin{frontmatter}

\title{\textbf{Tikhonov Regularized Iterative Methods for Nonlinear Problems }}
\author[iitbhu_math]{Avinash Dixit}
\ead{discover.avi92@gmail.com}
\author[radko2]{D. R. Sahu}
\ead{drsahudr@gmail.com}
\author[iitbhu_math]{Pankaj Gautam\corref{cor1}}
\ead{pankajg.rs.mat16@itbhu.ac.in}
\author[iitbhu_math]{T. Som}
\ead{tsom.apm@itbhu.ac.in}
\address[iitbhu_math]{Department of Mathematical Sciences,  Indian Institute of Technology (BHU) Varanasi \\ --221005, India}
\address[radko2]{Department of Mathematics, Banaras Hindu University, Varanasi-221005, India}
\cortext[cor1]{Corresponding author}

%
%
%

\begin{abstract}
 We consider the monotone inclusion problems in real Hilbert spaces. Proximal splitting algorithms are very popular technique to solve it and generally achieve weak convergence under mild assumptions. Researchers assume the strong conditions like strong convexity or strong monotonicity on the considered operators to prove strong convergence of the algorithms. Mann iteration method and normal S-iteration method are  popular methods to solve fixed point problems. We propose a new common fixed point algorithm based on normal S-iteration method {using Tikhonov regularization }to find common fixed point of nonexpansive operators and prove strong convergence of the generated sequence to the set of common fixed points without assuming strong convexity and strong monotonicity. Based on proposed fixed point  algorithm, we propose a forward-backward-type algorithm and a Douglas-Rachford algorithm in connection with Tikhonov regularization to find the solution of monotone inclusion problems. Further, we consider the complexly structured monotone inclusion problems which are very popular these days. We also propose a strongly convergent forward-backward-type primal-dual algorithm and a Douglas-Rachford-type primal-dual algorithm to solve the monotone inclusion problems. Finally, we conduct a numerical experiment to solve image deblurring problems. \\
\end{abstract}

\begin{keyword}
Fixed points of nonexpansive mappings \sep Tikhonov regularization \sep Splitting methods \sep
Forward–backward algorithm \sep Douglas–Rachford algorithm \sep Primal–dual algorithm.  \\
AMS Mathematics Subject Classification (2010): 47J25 $\cdot$  47H09 $\cdot$ 47H05 $\cdot$ 47A52.
\end{keyword}

\end{frontmatter}

\section{Introduction}

 Throughout the paper, $\mathcal{H}$ denotes a real Hilbert space with
	inner product $\langle \cdot ,\cdot \rangle $ and norm $\Vert \cdot \Vert $,
	respectively. Consider $\operatorname{T}: \mathcal{H} \to 2^{\mathcal{H}}$ is a set-valued monotone operator. The monotone inclusion problem is to find $x \in \mathcal{H}$ such that
\begin{equation}\label{P4e1.1}
	0 \in T(x).
\end{equation}
The monotone inclusion problem (\ref{P4e1.1}) plays important role in nonlinear analysis. Many problems arising in engineering, economics and physics can be framed as monotone inclusion problem  (see \cite{Barty2007,Bennar2007,Briceno2013,Combettes2005,Daubechies2004,Nemirovski2009}). Martinet \cite{Martinet1972} has proposed proximal point algorithm, which is very popular to solve monotone inclusion problem. The proximal point algorithm is given by
\begin{equation}\label{P4e1.2}
	x_{n+1}=J_{\lambda_n {T} }(x_n)\quad\forall n \in \mathbb{N},
\end{equation}
where $J_{\lambda_n {T} }= (Id+\lambda_n T)^{-1}$, $\lambda_n > 0$ is a regularization parameter and $x_1 \in \mathcal{H}.$ Rockafellar \cite{Rockafella1976,Rockafellar1976} has proved that proximal point algorithm converges weakly to solution set of inclusion problems in real Hilbert space framework. Further, he has introduced the inexact proximal point algorithm  as follows:
\begin{equation}\label{P4e1.3}
	x_{n+1}=J_{\lambda_n T} (x_n+\textcolor{red}{\epsilon_n}),\quad\forall n \in \mathbb{N},
\end{equation}
where $\{\epsilon_n\}$ is an error sequence in $\mathcal{H}$. The sequence $\{x_n\}$ also converges weakly to solution set of inclusion problem provided $\sum_{n=1}^{\infty} \| \epsilon_n\| < \infty.$ Guler \cite{Guler1991} has shown by an example that sequence generated by proximal point algorithm (\ref{P4e1.2}) converges weakly, but not strongly. It becomes a matter of interest for the research community to modify the proximal point algorithm to obtain the strong convergence. In such consequences, Tikhonov method has been proposed which generates as follows:
\begin{equation}
	x_{n+1}=J_{\lambda_n T} (x),
\end{equation}
where $x \in \mathcal{H}$ and $\lambda_n >0$ such that $\lambda_n \to \infty.$ Detailed study of Tikhonov regularization method can be found in \cite{Butnariu2008,Tikhonov1965,Tikhonov1977,Tikhonov1963,Xu2002}. Lehdili and Moudafi \cite{Lehdili1996} have combined the idea of proximal algorithm and Tikhonov regularization to find an algorithm converges strongly to solution of inclusion problem (\ref{P4e1.1}). They have solved the inclusion problem (\ref{P4e1.1}) by solving inclusion problem of fixed approximation of $T$, which is $T_n=T+\mu_n  Id$, i.e.,
\begin{equation*}
	\text{find } x\in \mathcal{H} \text{ such that } 0 \in T_n(x),
\end{equation*}
where $\mu_n$ is regularization parameter of $T$.
The proximal-Tikhonov algorithm is given by
$$x_{n+1}=J_{\lambda_{n}T_n}(x_{n}).$$
The Tikhonov regularization term $\mu_n Id$ has impelled the strong convergence to the algorithm. In absence of Tikhonov regularization term, proximal-Tikhonov algorithm becomes proximal algorithm which shows only weak convergence in most of the cases. The strong convergence of the algorithm can be obtained by using some other techniques also, some of them can be found in \cite{Bauschke2001,Haugazeau1968}.

Evaluation of resolvent is sometimes as hard as the original problem. This problem has been tried to resolve by splitting the operator in two operators, i.e., $T=A+B,$ whose resolvents are easy to compute. For $T=A+B,$ the monotone inclusion problem (\ref{P4e1.1}) becomes
\begin{equation}\label{P4e1.4}
	\text{find } x \in \mathcal{H} \text{ such that }0 \in (A+B)x,
\end{equation}
where $A : \mathcal{H} \to 2^\mathcal{H}$ is  maximally monotone operator  and $B$ is an operator. Problem (\ref{P4e1.1}) is also a generalization of the variational inequality problem:

\begin{equation*}
	\text{find } x_1 \in \mathcal{H} \text{ such that } (\exists x^* \in g(x_1)) (\forall x_2 \in \mathcal{H}) \langle x_1 -x_2, x^* \rangle \leq f(x_2) - f(x_1), 
\end{equation*}

where $f: \mathcal{H} \to \mathcal{H}$ is a proper, convex, lower semicontinuous and $g: \mathcal{H} \to 2^\mathcal{H}$ be a maximally monotone operator. 
The problem (\ref{P4e1.1}) serves as a blanket for various nonlinear problems viz. image denoising problem; clustering problem; wireless sensor network localization problem; matrix factorization problem; generalized Nash equilibrium problem and many more (see \cite{Bot2013,Bota2012,Bot2021,Gautam2021,Gur2020}).

Forward-backward splitting algorithm and Douglas-Rachford algorithm have been proposed to solve Problem (\ref{P4e1.4}). Forward-backward splitting method has been proposed by Lions and Mercier \cite{Lions1979}, Passty \cite{Passty1979}, which is given by
\begin{equation}
	x_{n+1}=(Id+\lambda_n A)^{-1}(Id-\lambda_n B)x_n,
\end{equation}
where $\lambda_n > 0$ and $B:\mathcal{H} \to \mathcal{H}$ is a cocoercive operator. Mercier \cite{Mercier1980} and Gabay \cite{Gabay1983} have studied the convergence behavior of forward-backward method when $A^{-1}$ is $\gamma$-strongly monotone with $\gamma >0.$ They have proved that forward-backward algorithm converges weakly to the point in the solution set provided $\lambda_n < 2 \gamma$, is constant. In addition, if  $A$ is strongly monotone, then $\{x_n\}$ shows strong convergence to the unique solution  of Problem (\ref{P4e1.4}). Chen and Rockafellar \cite{Chen1997} have also assumed the strong monotonicity of $A$ to prove the strong convergence of forward-backward method which depends on Lipschitz constant and modulus of strong monotonicity. Further, forward-backward method has been extensively studied, few of them can be found in (\cite{Chen1994,Chen1997,Mouallif1991,Moudafi1997}) and references therein. 

Douglas-Rachford method has been proposed to solve problem (\ref{P4e1.4}) when both $A$ and $B$ are set-valued. It has been originally proposed by Douglas and Rachford \cite{Douglas1956} to solve linear equations arising in heat-conduction problems. Lions and Mercier \cite{Lions1979} have extended the Douglas-Rachford algorithm to monotone operators. Douglas-Rachford algorithm is given as follows$\colon$
\begin{equation}
	x_{n+1}=R_B R_A x_n, \quad\forall n \in \mathbb{N},
\end{equation}
where $R_B$ and $R_A$ are reflected resolvent of operators $B$ and $A$, respectively. Lions and Mercier \cite{Lions1979} have proved that Douglas-Rachford algorithm converges weakly to a fixed point of operator $T$ which helps to obtain the solution of the Problem (\ref{P4e1.4}). Svaiter \cite{Svaiter2011} has supported the results of Lions and Mercier by proving the weak convergence of the shadow sequence to a solution. Further the analysis of the Douglas-Rachford algorithm can be found in (\cite{Artacho2013,Davis2015,Luke2020,Phan2016}).

Let $\mathcal{C}$ be a nonempty closed convex subset of $\mathcal{H}$ and $\mathcal{S}: \mathcal{C} \to \mathcal{C}$ be a nonexpansive operator. There are a number of iterative methods for finding fixed points of nonexpansive operators.  We recall some well known fixed point methods, which are given below$\colon$
\begin{enumerate}
	\item[$\bullet$] Mann iteration method \cite{Mann1953}:
	\begin{equation*}
		x_{n+1}=(1-\beta_n) x_n+ \beta_n \mathcal{S}(x_n),\quad\forall n \in \mathbb{N};
	\end{equation*}
	\item[$\bullet$] S-iteration method \cite{Agarwal2007}:
	\begin{equation*}
		x_{n+1}=(1-\alpha_n)\mathcal{S}(x_n) +\alpha_n \mathcal{S}[(1-\beta_n) x_n + \beta_n \mathcal{S} x_n],\quad\forall n \in \mathbb{N};
	\end{equation*}
	\item[$\bullet$] Normal S-iteration method \cite{Sahu2011}:
	\begin{equation*}
		x_{n+1}=\mathcal{S}[(1-\beta_n)x_n+\beta_n \mathcal{S} x_n],\quad\forall n \in \mathbb{N};
	\end{equation*}
\end{enumerate}
where $\alpha_n, \beta_n \in (0,1).$ The importance of these algorithms are not limited to solve fixed point problems, but these algorithms are also useful for solving inclusion problems of sum of a set-valued maximally monotone operator and a single-valued cocoercive operator, and inclusion problems of sum of two set-valued maximally monotone operators. The S-iteration methodology has been applied for solving
	various nonlinear problems, inclusion problems, optimization problems and
	image recovery problems. Recently, it has been demonstrated by Avinash et al. \cite{Dixit2019}  that the inertial normal S-iteration method has better performance compared to the inertial Mann iteration method. The S-iteration method and normal S-iteration method  are also useful  for finding common fixed points of nonexpansive operators. Since last few years, these properties of normal S-iteration make it popular among research community to find fixed point. Several research articles related to S-iteration and normal S-iteration can be found in \cite{Chang2013,Chang2014,Cholamjiak2015,Sahu2020,Sahuajit2020}. The weak convergence of the fixed point algorithms have reduced its applicability in infinite dimensional spaces. To achieve the strong convergence of algorithms one assumes stronger assumptions like strong monotonicity and strong convexity, which is difficult to achieve in many applications. This situation leaves a question to research community: can we find the strongly convergent algorithms without assuming these strong assumptions? The answer to this question is replied positively by Bo{\c{t}} et al. \cite{Bot2019}. They have modified the Mann algorithm  as follows:
\begin{equation}\label{P4e1.5}
	x_{n+1}=e_n x_n + \theta_n(\mathcal{S}(e_n x_n)-e_n x_n),
\end{equation}
where $e_n, \theta_n$ are positive real numbers. The strong convergence of algorithm (\ref{P4e1.5}) for nonexpansive operator $\mathcal{S}$  has been studied by Bo{\c{t}} et al. \cite{Bot2019}  when set of fixed points of $\mathcal{S}$ is nonempty and parameters $\theta_n$ and $e_n$ satisfy the following:
\begin{enumerate}
	\item [(i)] $0< e_{n} < 1$  for all $n \in \mathbb{N},$ $\lim\limits_{n \to \infty} e_{n} = 1$, $\sum_{n=1}^{\infty}(1-e_{n})= \infty$ and $\sum_{n=1}^{\infty}|e_{n}-e_{n-1}|< \infty;$
	\item  [(ii)] $0<\theta_{n}\leq 1$  for all $n \in \mathbb{N},$ $0<\liminf_{n\to \infty}\theta_{n},$  $\sum_{n=1}^{\infty} |\theta_{n}-\theta_{n-1}| < \infty.$
\end{enumerate}

  We consider the following more general problem:
\begin{Problem}\label{Prob1}
	Consider $\mathcal{T},\mathcal{S}:\mathcal{H} \to \mathcal{H}$ are nonexpansive operators. Find an element $x \in  \mathcal{H}$ such that $x \in \operatorname{Fix} (\mathcal{T})\cap \operatorname{Fix} (\mathcal{S})$.
\end{Problem}

The study on the common solutions of system of problems can be found in \cite{Gautam2020,Iemoto2009,Mainge2007}

	\begin{Remark} The algorithm (\ref{P4e1.5}) proposed by Bo{\c{t}} et al. \cite{Bot2019} can not directly apply to solve inclusion problem (\ref{P4e1.1}).
\end{Remark}

In this paper,  we introduce the normal S-iteration  method based fixed point algorithm   to find common fixed point of nonexpansive operators  $\mathcal{T}, \mathcal{S}:\mathcal{H} \to \mathcal{H}$, which converges strongly to common solutions of fixed point problem of operators $\mathcal{S}$ and $\mathcal{T}$. Based on the proposed fixed point algorithm, we develop a forward-backward algorithm  and a Douglas-Rachford algorithm containing Tikhonov regularization term to solve  the monotone inclusion problems.
In many cases, monotone inclusion problems are very complex, they contain mixture of linear and parallel sum monotone operators. Recently, many researchers have proposed primal-dual algorithms to precisely solve the considered complex monotone inclusion system \cite{Attouch1996,Bot2013,Brice2011,Combettes2012,BC2013}.  We have proposed a forward-backward type primal-dual algorithm and a Doughlas-Rachford type primal-dual algorithm having Tikhonov regularization term to find the common solution of the complexly structured monotone inclusion problems. The proposed algorithms have a special property that resolvents of all the operators are evaluated separately.\\

The paper is organized as follows: Next section recalls some important definitions and results in nonlinear analysis. In Section \ref{sec3}, we propose a normal S-iteration based Tikhonov regularized fixed point algorithm and study its convergence behavior. In Section \ref{sec4}, we propose a forward-backward-type algorithm and a forward-backward-type primal-dual algorithm to solve inclusion problem and complexly structured monotone inclusion problem, respectively. In Section \ref{sec5}, we propose Douglas-Rachford-type algorithms to solve monotone inclusion problems and complexly structured monotone inclusion problems of set-valued operators. In the last, we perform a numerical experiment to show the importance of proposed algorithms in solving image deblurring problems.



\section{Preliminaries}
This section devotes some important definitions and results from nonlinear analysis and operator theory.
Let $\mathbb{N}$ and $\mathbb{R}$ denote set of natural numbers and set of real numbers, respectively and  `$Id$' denotes identity operator. Consider the operator $T:\mathcal{H}\to 2^\mathcal{H}.$ Let $Gr(T)$  denote  the graph of $T$, $\operatorname{Zer}(T)$ denote  set of zeros of operator $T$ and $\operatorname{Fix} (T)$ denote  set of fixed points of $T$. The symbol $m$ is used to denote a strictly positive integer throughout the paper. The set of proper convex lower semicontinuous functions from $\mathcal{H}$ to $[-\infty,+\infty]$ is denoted by $\Gamma(\mathcal{H}).$ Let $f \in \Gamma(\mathcal{H}),$ then $argmin_{x\in \mathcal{H}}f(x)= \{x^*\in \mathcal{H}:f(x^*)\leq f(y), \forall y \in \mathcal{H}\}$ and $argmax_{x\in \mathcal{H}}f(x)=\{x^*\in \mathcal{H}: f(x^*) \geq f(y), \forall y \in \mathcal{H}\}.$ 
Let $A:\mathcal{H} \to 2^\mathcal{H}$ be an operator. Domain of $A$ is $\operatorname{dom}$ $(A)=\{x\in \mathcal{H}: Ax\neq \emptyset\}$. Range of $A$ is denoted by ran $(A)$ = $\cup_{x\in \mathcal{H}}$ $Ax$. $A$ is said to be monotone if
$$\langle x-y, u-v \rangle \geq 0, \ \forall (x,u), (y,v) \in Gr (A).$$
$A$ is said to be maximally monotone if there exists no monotone operator $B:\mathcal{H} \to 2^\mathcal{H}$ such that $Gr (B)$ properly contains $Gr (A).$ $A$ is strongly monotone with constant $\beta \in (0,\infty)$ if
\begin{equation*}
	\langle x-y,u-v \rangle \leq \beta \|x-y\|^2 \  \forall (x,u), (y,v) \in Gr(A).
\end{equation*}
 The resolvent of $A$ is defined by $J_A= (Id+A)^{-1} $ and the reflected resolvent of $A$ is $R_A= 2J_A -Id.$
Consider $f: \mathcal{H} \to [-\infty,\infty]$. The conjugate of $f$ is defined by $f^*:\mathcal{H} \to [-\infty,\infty],$ $$u \mapsto \sup_{x \in \mathcal{H}} \left( \langle x,u \rangle -f(x)\right). $$
Let $f:\mathcal{H} \to [-\infty,\infty]$ be a proper function. The subdifferential of $f$ is $\partial f:\mathcal{H} \to 2^\mathcal{H}$ is defined by
$$x \mapsto \{u \in \mathcal{H}| f(y) \geq f(x)+\langle y-x,u \rangle \  \forall y \in \mathcal{H}\}.$$
If $f \in \Gamma(\mathcal{H}),$ then $\partial f$ is maximally monotone. The resolvent of subdifferential of $f$ is $prox_f$,  where $prox_f:\mathcal{H}\to \mathcal{H}$ defined by
\begin{equation*}
	prox_f(x)=argmin_{y\in \mathcal{H}}\left\lbrace f(y)+\frac{1}{2}\|y-x\|^2\right\rbrace .
\end{equation*}

\begin{Definition}
	Let $\mathcal{C}$ be a nonempty subset of $\mathcal{H}$. Then:
	\item [(i)] interior of $\mathcal{C}$ is
	\begin{equation*}
		\text{ int } \mathcal{C}=\{x\in \mathcal{C}:(\exists \rho > 0) B(0;\rho) \subset \mathcal{C}-x\};
	\end{equation*}
	\item  [(ii)]strong relative interior of $\mathcal{C}$ is
	\begin{equation*}
		\text{sri } \mathcal{C} =\{x\in \mathcal{C}: cone(\mathcal{C}-x)=\overline{span}(\mathcal{C}-x) \};
	\end{equation*}
	\item  [(iii)]strong quasi-relative interior of $\mathcal{C}$ is
	\begin{equation*}
		\text{sqri } \mathcal{C} = \{x \in \mathcal{C}: \bigcup_{\rho > 0}\rho(\mathcal{C}-x) \text{is a closed linear subspace of space } \mathcal{H}\}.
	\end{equation*}
\end{Definition}
In case $\mathcal{H}$ is finite dimensional, sqri and sri are equivalent.

\begin{Definition}
	Let $C$ be a nonempty subset of $\mathcal{H},$ and $T:C\to \mathcal{H}$ be a nonexpansive operator. $T$ is said to be
	\begin{enumerate}
		\item[(i)] nonexpansive if
		\begin{equation*}
			\|Tx-Ty\|\leq \|x-y\| \ \forall x,y \in C;
		\end{equation*}
		\item[(ii)] firmly nonexpansive if
		\begin{equation*}
			\|Tx-Ty\|^2 +\|(Id-T)x-(Id-T)y\|^2 \leq \|x-y\|^2 \ \forall x,y \in C;
		\end{equation*}
		\item[(iii)] $\beta$-cocoercive ($\beta>0$) if
		\begin{equation*}
			\langle x-y,Tx-Ty \rangle \leq \beta \|Tx-Ty\|^2 \ \forall x, y \in C
		\end{equation*}
		\item[(iv)] $\alpha$-averaged for $\alpha \in (0,1)$ if there exists a nonexpansive operator $R:C\to \mathcal{H}$ such that $T=(1-\alpha)Id +\alpha R$.
	\end{enumerate}
\end{Definition}
An operator $T:\mathcal{H}\to 2^\mathcal{H}$ is strongly monotone with $\beta \in (0, \infty)$ implies $T^{-1}:\mathcal{H} \to \mathcal{H}$ is $\beta$-cocoerceive.
\begin{Definition}\cite{Bauschke2011}
	Let $\mathcal{C}$ be a nonempty subset of $\mathcal{H}.$ Then:
	
	\item [(i)] The indicator function $i_\mathcal{C}:\mathcal{H} \to[-\infty,+\infty]$ is defined by
	\begin{eqnarray}{
			i_\mathcal{C}(x)	=\left\{
			\begin{array}{lc@{}c@{}r}
				0, \ \ if \ x \in \mathcal{C}\\
				\infty \ \ otherwise
			\end{array}\right.}.
	\end{eqnarray}
	\item [(ii)] The projection of a point $x \in \mathcal{H}$ on $\mathcal{C}$ is defined by $\operatorname{proj}_C(x) =\left\lbrace u \in \mathcal{C}:u=argmin_{z \in \mathcal{C} }\|x-z\|\right\rbrace.$
	
	\item [(iii)]  Suppose $\mathcal{C}$ is convex, then normal cone to $\mathcal{C}$ at $x$ is defined by
	\begin{eqnarray}{
			N_\mathcal{C}(x)	=\left\{
			\begin{array}{lc@{}c@{}r}
				u \in \mathcal{H}:\sup \langle y-x,u \rangle \leq 0 ~\forall y \in \mathcal{C}, \ if \ x \in \mathcal{C}\\
				\emptyset, \ \ \ \ otherwise.
			\end{array}\right.}
	\end{eqnarray}
\end{Definition}

\begin{Definition}\cite[Proposition 4.32]{Bauschke2011}
	The parallel sum of two operators $T_1, T_2:\mathcal{H}\to 2^\mathcal{H}$ is $T_1 \Box T_2 :\mathcal{H} \to 2^\mathcal{H}$ defined by $T_1 \Box T_2  = (T_1 ^{-1}+T_2 ^{-1})^{-1}$.
\end{Definition}
The subdifferential of parallel sum of operators $T_1$ and $T_2$ is $\partial (T_1 \square T_2)=\partial T_1 \square \partial T_2$.
\begin{Remark}\label{P3R2.1}
	If $T_1 $ and $T_2 $ are monotone then the set of zeros of their sum $\operatorname{Zer}(T_1+T_2 )=J_{\gamma T_2} (\operatorname{Fix}(R_{\gamma T_1 } R_{\gamma T_2 }))$ $\forall \gamma >0$ and $R_{\gamma T_i}=2J_{\gamma T_i}-Id, i=1,2$.
\end{Remark}

\begin{Proposition}\cite{Bauschke2011}
	Consider $T_1,T_2 : \mathcal{H} \to \mathcal{H}$ be $\alpha_1,\alpha_2$-averaged operates, respectively. Then the averaged operator $T_1 \circ T_2$ is $\alpha= \frac{\alpha_1+\alpha_2-2\alpha_1\alpha_2}{1-\alpha_1\alpha_2}$-averaged.
\end{Proposition}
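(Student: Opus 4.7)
The plan is to use the standard characterization of averaged operators: $T$ is $\alpha$-averaged if and only if
\[
\|Tx - Ty\|^2 + \frac{1-\alpha}{\alpha}\,\|(Id-T)x - (Id-T)y\|^2 \leq \|x-y\|^2 \quad \forall x,y.
\]
So my target, for $T_1 \circ T_2$, is to find the smallest admissible $\alpha$ such that an inequality of this form holds. First I would introduce the shorthand $u := (Id - T_2)x - (Id - T_2)y$ and $v := (Id - T_1)T_2 x - (Id - T_1)T_2 y$, and observe the telescoping identity
\[
(Id - T_1 T_2)x - (Id - T_1 T_2)y = u + v.
\]

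Next, applying the characterization to $T_2$ at the pair $(x,y)$ and to $T_1$ at the pair $(T_2 x, T_2 y)$, and then chaining them through $\|T_1 T_2 x - T_1 T_2 y\|^2 \leq \|T_2 x - T_2 y\|^2 - \tfrac{1-\alpha_1}{\alpha_1}\|v\|^2$, I would arrive at
\[
\|T_1 T_2 x - T_1 T_2 y\|^2 \leq \|x-y\|^2 - \frac{1-\alpha_2}{\alpha_2}\|u\|^2 - \frac{1-\alpha_1}{\alpha_1}\|v\|^2.
\]
Hence it suffices to exhibit a constant $c \geq 0$ with $c\|u+v\|^2 \leq \tfrac{1-\alpha_2}{\alpha_2}\|u\|^2 + \tfrac{1-\alpha_1}{\alpha_1}\|v\|^2$ for all $u,v$, because then $\tfrac{1-\alpha}{\alpha} := c$ yields exactly the averagedness inequality for $T_1 \circ T_2$.

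The heart of the proof, and the main obstacle, is to determine the sharpest such $c$. Writing $a := \tfrac{1-\alpha_1}{\alpha_1}$ and $b := \tfrac{1-\alpha_2}{\alpha_2}$, the required inequality is
\[
(b - c)\|u\|^2 - 2c\,\langle u, v\rangle + (a - c)\|v\|^2 \geq 0 \quad \forall u,v,
\]
which is a nonnegativity condition for a $2\times 2$ quadratic form; it is equivalent to $b \geq c$, $a \geq c$, and the discriminant condition $(b-c)(a-c) \geq c^2$. Solving this last condition gives the sharp choice $c = \tfrac{ab}{a+b}$. I would then perform the elementary algebraic simplification
\[
\frac{ab}{a+b} = \frac{(1-\alpha_1)(1-\alpha_2)}{\alpha_1 + \alpha_2 - 2\alpha_1\alpha_2},
\]
and solve $\tfrac{1-\alpha}{\alpha} = c$ for $\alpha$ to obtain the advertised value $\alpha = \tfrac{\alpha_1 + \alpha_2 - 2\alpha_1\alpha_2}{1 - \alpha_1\alpha_2}$. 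A quick sanity check that $\alpha \in (0,1)$ (using $\alpha_1,\alpha_2 \in (0,1)$) completes the verification that $T_1 \circ T_2$ is $\alpha$-averaged.
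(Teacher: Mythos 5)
Your argument is correct: the chaining of the two averagedness inequalities, the telescoping identity $(Id-T_1T_2)x-(Id-T_1T_2)y=u+v$, the optimization of the quadratic form to get $c=\tfrac{ab}{a+b}$, and the final algebra all check out, and the resulting $\alpha=\tfrac{\alpha_1+\alpha_2-2\alpha_1\alpha_2}{1-\alpha_1\alpha_2}$ indeed lies in $(0,1)$. The paper states this proposition as a citation without reproducing a proof, and your derivation is essentially the standard one from the cited reference (Bauschke--Combettes), so there is nothing to add.
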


\begin{Lemma}\emph{\cite[Corollary 4.18]{Bauschke2011}}\label{P4L2.1}
	Let  $T:\mathcal{H} \to \mathcal{H}$ be a nonexpansive mapping. Let $\{u_n\}$ be a sequence in $\mathcal{H}$ and $u\in \mathcal{H}$ such that $u_n \rightharpoonup  u$ and $u_n-Tu_n \to 0$ as $n \to \infty$.
	Then $u \in \operatorname{Fix}(T)$.
\end{Lemma}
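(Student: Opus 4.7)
The plan is to prove the demiclosedness principle for nonexpansive maps via Opial's identity in Hilbert space. I would argue by contradiction: suppose $u \notin \operatorname{Fix}(T)$, so $\|u - Tu\| > 0$, and derive a contradiction from the weak convergence $u_n \rightharpoonup u$ combined with the asymptotic regularity $u_n - Tu_n \to 0$.

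First I would use nonexpansiveness of $T$ and the triangle inequality to compare the distances from $u_n$ to $u$ and to $Tu$. Specifically, from
\[
\|u_n - Tu\| \le \|u_n - Tu_n\| + \|Tu_n - Tu\| \le \|u_n - Tu_n\| + \|u_n - u\|,
\]
and the hypothesis $\|u_n - Tu_n\| \to 0$, one obtains $\limsup_n \|u_n - Tu\| \le \liminf_n \|u_n - u\|$. So far this only uses the nonexpansive bound and the asymptotic fixed-point property.

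Next I would bring weak convergence into play via the Hilbert-space identity
\[
\|u_n - Tu\|^2 = \|u_n - u\|^2 + 2\langle u_n - u,\, u - Tu\rangle + \|u - Tu\|^2.
\]
Since $u_n \rightharpoonup u$, the middle term tends to $0$, hence
\[
\liminf_n \|u_n - Tu\|^2 = \liminf_n \|u_n - u\|^2 + \|u - Tu\|^2.
\]
Combining with the earlier inequality forces $\|u - Tu\|^2 \le 0$, which contradicts the assumption $Tu \ne u$. Therefore $u \in \operatorname{Fix}(T)$.

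The argument is essentially Opial's lemma packaged inside a two-line contradiction, and there is no real obstacle: the only subtle point is ensuring that the weakly convergent sequence is bounded (which is automatic in Hilbert space by the uniform boundedness principle) so that the cross term $\langle u_n - u, u - Tu\rangle$ vanishes in the limit. Since the statement is cited directly from Bauschke--Combettes, I would simply sketch the Opial-based proof above and cite Corollary 4.18 in \cite{Bauschke2011} for completeness.
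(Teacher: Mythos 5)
The paper offers no proof of this lemma at all: it is imported verbatim as \cite[Corollary 4.18]{Bauschke2011} (the demiclosedness principle for nonexpansive operators), so there is no in-paper argument to compare against. Your Opial-identity proof is the standard one and is sound in substance: the expansion $\|u_n - Tu\|^2 = \|u_n - u\|^2 + 2\langle u_n - u, u - Tu\rangle + \|u - Tu\|^2$, the vanishing of the cross term by weak convergence, and the cancellation of $\liminf_n\|u_n-u\|^2$ (finite because weakly convergent sequences are bounded) together force $\|u - Tu\| = 0$.

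One step is misstated, though it is repairable in a line. From $\|u_n - Tu\| \le \|u_n - Tu_n\| + \|u_n - u\|$ with $\|u_n - Tu_n\| \to 0$ you may conclude $\liminf_n \|u_n - Tu\| \le \liminf_n \|u_n - u\|$ or $\limsup_n \|u_n - Tu\| \le \limsup_n \|u_n - u\|$, but \emph{not} the mixed inequality $\limsup_n \|u_n - Tu\| \le \liminf_n \|u_n - u\|$ that you assert; if $\|u_n - u\|$ oscillates, that mixed inequality fails. Fortunately your final contradiction only uses $\liminf_n \|u_n - Tu\|^2 \le \liminf_n \|u_n - u\|^2$, which the correct $\liminf$-version delivers, so replace $\limsup$ by $\liminf$ on the left and the argument closes. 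A second, cosmetic point: boundedness of $\{u_n\}$ is not what makes the cross term $\langle u_n - u, u - Tu\rangle$ vanish --- that is immediate from weak convergence against the fixed vector $u - Tu$; boundedness is what guarantees $\liminf_n\|u_n - u\|^2 < \infty$ so that it can be cancelled from both sides.
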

\begin{Lemma}\cite[Lemma 2.5]{Xu2002}\label{P4L2.2}
	Let $\{a_n\}$ be a sequence of nonnegative real numbers satisfying the inequality:
	$$a_{n+1}\leq (1-\theta_{n})a_n+\theta_n b_n+\epsilon_n \ \ \forall n \geq 0,$$
	 where
	\begin{enumerate}
		\item[(i)] $ 0\leq \theta_n \leq 1 $ for all $n \geq 0$ and $\sum_{n\geq 0} \theta_n = \infty;$
		\item[(ii)] $\limsup_{n\to \infty} b_n \leq 0$;
		\item[(iii)] $\epsilon_n \geq 0 \text{ for all } n \geq 0 \text{ and } \sum_{n\geq 0} \epsilon_n < \infty.$
	\end{enumerate}
	Then the sequence $\{a_n\}$ converges to $0$.
\end{Lemma}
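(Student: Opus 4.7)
The plan is to iterate the recursion over a tail starting from an index large enough to make both $b_n$ nearly nonpositive and the remaining sum of $\epsilon_n$ small, and then to use the telescoping identity
$$\sum_{k=N}^{n}\theta_k\prod_{j=k+1}^{n}(1-\theta_j) \;=\; 1-\prod_{k=N}^{n}(1-\theta_k),$$
together with the divergence $\sum\theta_n=\infty$, to show the iterated bound decays. Standard trick: handle the ``bad'' terms $b_n$ and $\epsilon_n$ separately and let $\delta\downarrow 0$.

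First I would fix an arbitrary $\delta>0$. By (ii) there is $N_1$ with $b_n\leq\delta$ for all $n\geq N_1$, and by (iii) there is $N_2$ with $\sum_{k\geq N_2}\epsilon_k<\delta$. Put $N=\max(N_1,N_2)$. Iterating the inequality from $N$ up to $n$ and absorbing the factors $\prod_{j=k+1}^{n}(1-\theta_j)\leq 1$ on the $\epsilon_k$ contributions, I obtain
$$a_{n+1}\;\leq\;\Big(\prod_{k=N}^{n}(1-\theta_k)\Big)\,a_N \;+\;\delta\Big(1-\prod_{k=N}^{n}(1-\theta_k)\Big)\;+\;\sum_{k=N}^{n}\epsilon_k,$$
where the middle term comes directly from the telescoping identity above applied to the contributions $\theta_k b_k\leq\theta_k\delta$.

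Second I would push $n\to\infty$. Using $1-\theta_k\leq e^{-\theta_k}$ and (i) gives $\prod_{k=N}^{n}(1-\theta_k)\leq\exp\!\big(-\!\sum_{k=N}^{n}\theta_k\big)\to 0$. Hence the first term vanishes, the middle term is bounded above by $\delta$, and the last term is bounded above by $\delta$ uniformly in $n$ by the choice of $N$. Therefore $\limsup_n a_{n+1}\leq 2\delta$; since $\delta>0$ was arbitrary and $a_n\geq 0$, it follows that $a_n\to 0$.

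The only real obstacle is bookkeeping: verifying the telescoping identity (an easy induction on $n$) and making sure the product factors on the $\epsilon_k$ terms are correctly bounded by $1$ so that $\sum_{k\geq N}\epsilon_k$ can be used as a uniform majorant. Everything else is an $\varepsilon$/$\delta$ limit argument and requires no additional structure from the problem.
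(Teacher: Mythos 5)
Your argument is correct: the telescoping identity, the bound $1-\theta_k\le e^{-\theta_k}$ combined with $\sum\theta_k=\infty$ to kill the product, and the separate uniform majorants $\delta$ for the $b_k$-contribution and $\sum_{k\ge N}\epsilon_k<\delta$ for the error terms together give $\limsup_n a_n\le 2\delta$ for every $\delta>0$, which suffices. The paper itself offers no proof of this lemma --- it is quoted verbatim from Xu \cite[Lemma 2.5]{Xu2002} --- and your proof is essentially the standard one given there, so there is nothing to reconcile.
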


\section{Tikhonov Regularized Strongly Convergent Fixed Point Algorithm}\label{sec3}
This section devotes  to investigate a computational theory for finding common fixed points of nonexpansive operators. We introduce a common fixed point algorithm such that sequence generated by the algorithm strongly converges to the set of common fixed points of mappings.
\begin{Algorithm}\label{P4A3.1}
	Let $\mathcal{S},\mathcal{T} :\mathcal{H} \to \mathcal{H}$ be nonexpansive mappings. Select $\{e_{n}\}$, $\{\theta_{n}\} \subset (0,1)$ and compute the $(n+1)^{th}$ iteration as follows:
	\begin{equation}\label{P4e3.1}
		y_{n+1}= \mathcal{S}[(1-\theta_n)e_{n}y_n+\theta_n \mathcal{T}(e_{n}y_n)] \ \ \ \  \textrm{for all} \ n\in\mathbb{N}.
	\end{equation}
\end{Algorithm}

We now study the convergence behavior of Algorithm \ref{P4A3.1} for finding the common fixed point of $\mathcal{S}$ and $\mathcal{T}$.  
\begin{Theorem}\label{P4T3.1}
	Let $\mathcal{S},\mathcal{T} :\mathcal{H} \to \mathcal{H}$ be nonexpansive mappings such that $\Omega:=\operatorname{Fix} (\mathcal{T})\cap \operatorname{Fix} (\mathcal{S})\neq \emptyset$. Let $\{y_n\}$ be a sequence in $\mathcal{H}$ defined by Algorithm \ref{P4A3.1}, where  $\{\theta_{n}\}$ and $\{e_{n}\}$ are real sequences satisfy the following conditions:
	\begin{enumerate}
		\item [\textbf{(i)}]\label{P4C1}
		$0< e_{n} < 1$  for all $n \in \mathbb{N},$ $\lim\limits_{n \to \infty} e_{n} = 1$, $\sum_{n=1}^{\infty}(1-e_{n})= \infty$ and $\sum_{n=1}^{\infty}|e_{n}-e_{n-1}|< \infty;$
		\item [\textbf{(ii)}]\label{P4C2}
		$0<\underline{\theta}\le \theta_{n}\leq\overline{\theta}< 1$   for all $n \in \mathbb{N},$  and  $\sum_{n=1}^{\infty} |\theta_{n}-\theta_{n-1}| < \infty.$
	\end{enumerate}
	Then the sequence $\{y_n\}$  converges strongly to $\operatorname{proj}_{\Omega}(0).$
\end{Theorem}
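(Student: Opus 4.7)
The plan is to apply the Tikhonov-type strong convergence template: prove boundedness of $\{y_n\}$, asymptotic regularity $\|y_{n+1}-y_n\|\to 0$, show that every weak cluster point of $\{y_n\}$ lies in $\Omega$, and finally feed a suitable recursion into Lemma \ref{P4L2.2}. Fix $p=\operatorname{proj}_{\Omega}(0)$ and set $z_n=e_ny_n$ and $u_n=(1-\theta_n)z_n+\theta_n\mathcal{T}z_n$, so that $y_{n+1}=\mathcal{S}u_n$. Nonexpansiveness of $\mathcal{S}$ and $\mathcal{T}$ applied to the decomposition $z_n-p=e_n(y_n-p)-(1-e_n)p$ yields $\|y_{n+1}-p\|\le e_n\|y_n-p\|+(1-e_n)\|p\|$, whence $\|y_n-p\|\le\max\{\|y_1-p\|,\|p\|\}$ by induction, so $\{y_n\}$ is bounded.

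For asymptotic regularity I would split $u_n-u_{n-1}$ as $(1-\theta_n)(z_n-z_{n-1})+\theta_n(\mathcal{T}z_n-\mathcal{T}z_{n-1})+(\theta_n-\theta_{n-1})(\mathcal{T}z_{n-1}-z_{n-1})$ and further $z_n-z_{n-1}=e_n(y_n-y_{n-1})+(e_n-e_{n-1})y_{n-1}$. Nonexpansiveness, boundedness of $\{y_n\}$, and the bound $\|\mathcal{T}z_{n-1}-z_{n-1}\|\le 2\|z_{n-1}-p\|$ then produce
$\|y_{n+1}-y_n\|\le e_n\|y_n-y_{n-1}\|+M\bigl(|e_n-e_{n-1}|+|\theta_n-\theta_{n-1}|\bigr)$
for some constant $M>0$. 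Invoking Lemma \ref{P4L2.2} with the lemma's parameter taken as $1-e_n$ (summable to infinity by (i)), $b_n=0$, and the bracketed term (summable by (i)--(ii)) as $\epsilon_n$ yields $\|y_{n+1}-y_n\|\to 0$. I expect this asymptotic-regularity step to be the principal obstacle, since it is where the full strength of the summability conditions in (i) and (ii) is required and where the bookkeeping between the Tikhonov perturbation and the S-iteration blend is most delicate.

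Next, the Mann-step identity applied inside $u_n$ together with nonexpansiveness of $\mathcal{S}$ gives
$\|y_{n+1}-p\|^2\le\|u_n-p\|^2\le\|z_n-p\|^2-\theta_n(1-\theta_n)\|\mathcal{T}z_n-z_n\|^2.$
Since $\|z_n-y_n\|=(1-e_n)\|y_n\|\to 0$, boundedness gives $\|z_n-p\|^2-\|y_n-p\|^2\to 0$; combined with $\|y_n-p\|^2-\|y_{n+1}-p\|^2\to 0$ from the previous paragraph and $\theta_n(1-\theta_n)\ge\underline{\theta}(1-\overline{\theta})>0$, this forces $\|\mathcal{T}z_n-z_n\|\to 0$, hence $\|\mathcal{T}y_n-y_n\|\to 0$ by nonexpansiveness of $\mathcal{T}$. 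A parallel bound $\|\mathcal{S}y_n-y_n\|\le\|y_{n+1}-y_n\|+\|y_n-z_n\|+\theta_n\|\mathcal{T}z_n-z_n\|\to 0$ closes the demiregularity step, and Lemma \ref{P4L2.1} then shows every weak cluster point of $\{y_n\}$ belongs to $\operatorname{Fix}(\mathcal{S})\cap\operatorname{Fix}(\mathcal{T})=\Omega$.

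To conclude, expanding $\|z_n-p\|^2=e_n^2\|y_n-p\|^2-2e_n(1-e_n)\langle y_n-p,p\rangle+(1-e_n)^2\|p\|^2$ and combining with $\|y_{n+1}-p\|^2\le\|z_n-p\|^2$ yields
$\|y_{n+1}-p\|^2\le(1-\alpha_n)\|y_n-p\|^2+\alpha_n b_n$
with $\alpha_n=1-e_n^2$ and $b_n=\tfrac{2e_n\langle p,p-y_n\rangle+(1-e_n)\|p\|^2}{1+e_n}$. The variational characterization $p=\operatorname{proj}_{\Omega}(0)$ reads $\langle p,p-q\rangle\le 0$ for every $q\in\Omega$, so the preceding paragraph together with a standard subsequence argument gives $\limsup_n\langle p,p-y_n\rangle\le 0$ and therefore $\limsup_n b_n\le 0$. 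Since $\alpha_n\ge 1-e_n$ and $\sum_n(1-e_n)=\infty$, a final application of Lemma \ref{P4L2.2} delivers $\|y_n-p\|\to 0$, which is the desired strong convergence to $\operatorname{proj}_{\Omega}(0)$.
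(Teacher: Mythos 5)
Your proposal is correct and follows essentially the same route as the paper's proof: the same four steps (boundedness, asymptotic regularity via Lemma \ref{P4L2.2}, vanishing residuals $\|y_n-\mathcal{T}y_n\|,\|y_n-\mathcal{S}y_n\|\to 0$ via the convexity identity, and the final Xu-lemma recursion with the projection's variational inequality controlling the $\limsup$). The only differences are cosmetic — you obtain $\|z_n-p\|^2-\|y_n-p\|^2\to 0$ directly from $\|z_n-y_n\|\to 0$ rather than re-expanding the convex combination, and you prove the $\limsup$ inequality directly by subsequences where the paper argues by contradiction.
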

\begin{proof}
	In order to prove the convergence of the sequence $\{y_n\}$, we proceed with following steps:
	\begin{enumerate}
		\item[Step 1.]\label{C3.1} Sequence $\{y_n\}$ is bounded.\\
		
		Let  $y\in \Omega$. Since $\mathcal{S}$ and $\mathcal{T}$ are nonexpansive, we have following
		\begin{eqnarray}\label{P4e1}
			\|y_{n+1}-y\|&=& \|\mathcal{S}[(1-\theta_{n})e_{n}y_n + \theta_{n} \mathcal{T}(e_{n}y_n)]-y\| \nonumber\\ [6pt]
			&\leq &\|(1-\theta_{n})e_{n}y_n + \theta_{n} \mathcal{T}(e_{n}y_n)-y\| \nonumber\\[6pt]
			&\leq &(1-\theta_{n})\|e_{n}y_n -y\|+ \theta_{n}\| \mathcal{T}(e_{n}y_n)-y\| \nonumber\\[6pt]
			&\leq & \|e_{n}y_n-y\| \\[6pt]
			&=&\|e_{n}(y_n-y)-(1-e_{n})y\|\nonumber\\[6pt]
			&\leq & e_{n}\|(y_n-y)\|+(1-e_{n})\|y\|\nonumber\\[6pt]
			&\leq & \text{max}\{\|y_0-y\|,\|y\|\}\nonumber.
		\end{eqnarray}
		Thus, $\{y_n\}$ is bounded.\\
		
		\item[Step 2.]\label{C3.2} $\|y_{n+1}-y_n\| \to 0$ as $n \to \infty.$\\
		
		Using nonexpansivity of $\mathcal{S} \text{ and } \mathcal{T}$, we have
		\begin{eqnarray}
			\|y_{n+1}-y_n\|&=&\|\mathcal{S}[(1-\theta_{n})e_{n}y_n + \theta_{n} \mathcal{T}(e_{n}y_{n})]-\mathcal{S}[(1-\theta_{n-1})e_{n-1}y_{n-1} + \theta_{n-1} \mathcal{T}(e_{n-1}y_{n-1})]\| \nonumber\\[6pt]
			&\leq&\|(1-\theta_{n})e_{n}y_n + \theta_{n} \mathcal{T}(e_{n}y_{n})-(1-\theta_{n-1})e_{n-1}y_{n-1} - \theta_{n-1} \mathcal{T}(e_{n-1}y_{n-1})\| \nonumber\\[6pt]
			&=& \|(1-\theta_{n})e_{n}y_n-(1-\theta_{n-1})e_{n-1}y_{n-1}+\theta_{n} \mathcal{T}(e_{n}y_{n}) - \theta_{n-1} \mathcal{T}(e_{n-1}y_{n-1})\| \nonumber\\[6pt]
			&\leq&\|(1-\theta_{n})(e_{n}y_n-e_{n-1}y_{n-1})+(\theta_{n-1}-\theta_{n})e_{n-1}y_{n-1})\|\nonumber\\
			&+&\|\theta_{n} (\mathcal{T}(e_{n}y_{n})- \mathcal{T}(e_{n-1}y_{n-1}))+(\theta_{n}-\theta_{n-1})\mathcal{T}(e_{n-1}y_{n-1})\| \nonumber\\[6pt]
			&\leq&\|e_{n}y_n-e_{n-1}y_{n-1}\|+|\theta_{n}-\theta_{n-1}|\mathcal{C}_1 \nonumber\\[6pt]
			&=& \|e_{n}(y_n-y_{n-1})+(e_{n}-e_{n-1})y_{n-1}\|+|\theta_{n}-\theta_{n-1}|\mathcal{C}_1\nonumber\\[6pt]
			&\leq& e_{n}\|y_n-y_{n-1}\|+|e_{n}-e_{n-1}| \mathcal{C}_2 +|\theta_{n}-\theta_{n-1}|\mathcal{C}_1, \nonumber
		\end{eqnarray}
		for some $\mathcal{C}_1,$ $\mathcal{C}_2 >0$. By applying Lemma \ref{P4L2.2} with $a_n=\|y_n-y_{n-1}\|, b_n=0$, $ \epsilon_n=|e_{n}-e_{n-1}| \mathcal{C}_2 +|\theta_{n}-\theta_{n-1}|\mathcal{C}_1 \text{ and } \theta_{n}=1-e_{n}, \forall n \in \mathbb{N}$, we obtain that $\|y_{n+1}-y_n\| \to 0$.\\
		
		\item[Step 3.]\label{C3.3} $\|y_n-\mathcal{T}y_n\|$ and $\|y_n-\mathcal{S}y_n\| \to 0$ as $n \to \infty$.\\
		
		Let  $y\in \Omega$. Note\begin{eqnarray}
			\|y_{n+1}-y\|^2&=& \|\mathcal{S}[(1-\theta_{n})e_{n}y_n + \theta_{n} \mathcal{T}(e_{n}y_n)]-y\|^2 \nonumber\\ [6pt]
			&\leq &\|(1-\theta_{n})e_{n}y_n + \theta_{n} \mathcal{T}(e_{n}y_n)-y\|^2 \nonumber\\[6pt]
			&=&(1-\theta_{n})\|e_{n}y_n-y\|^2 + \theta_{n} \|\mathcal{T}(e_{n}y_n)-y\|^2 -\theta_{n}(1-\theta_{n}) \|e_{n}y_n-\mathcal{T}(e_{n}y_n)\|^2\nonumber\\ [6pt]
			&\leq&(1-\theta_{n})\|e_{n}y_n-y\|^2 + \theta_{n} \|e_{n}y_n-y\|^2 -\theta_{n}(1-\theta_{n}) \|e_{n}y_n-\mathcal{T}(e_{n}y_n)\|^2\nonumber\\[6pt]
			&=& \|e_{n}y_n-y\|^2 - \theta_{n}(1-\theta_{n})\|e_{n}y_n - \mathcal{T}(e_{n}y_n)\|^2,
		\end{eqnarray}
		which implies that

	\begin{align}\label{new1}
		\theta_{n}(1-\theta_{n})&\|e_{n}y_n - \mathcal{T}(e_{n}y_n)\|^2\nonumber\\ 
		&\leq \|e_{n}y_n-y\|^2 -\|y_{n+1}-y\|^2\nonumber\\
		&= \|e_n y_n -e_n y +e_n y -y\|^2-\|y_{n+1}-y\|^2\nonumber\\
		&= (1-e_n) \|y\|^2 + e_n \|y_n -y\|^2 -e_n (1-e_n) \|y_n\|^2 -\|y_{n+1}-y\|^2\nonumber\\
		&\leq (1-e_n) \|y\|^2 + e_n \|y_n -y\|^2 -e_n (1-e_n) \|y_n\|^2 -e_n\|y_{n+1}-y\|^2\nonumber\\
		&= e_n \{\|y_n -y\|^2 -\|y_{n+1}-y\|^2\}+(1-e_n) \|y\|^2 \nonumber\\
		&-e_n (1-e_n) \|y_n\|^2.
		\end{align}

	Since,
	\begin{equation*}
		\|y_n-y\|^2 \leq\|y_n-y_{n+1}\|^2 +\|y_{n+1}-y\|^2 +2 \|y_n-y_{n+1}\|\|y_{n+1}-y\|
	\end{equation*}
	which can be rewritten as
	\begin{equation*}
		\|y_n-y\|^2 -\|y_{n+1}-y\|^2  \leq \|y_n-y_{n+1}\|^2 + 2\|y_n-y_{n+1}\|\|y_{n+1}-y\|.
	\end{equation*}

	Thus (\ref{new1}) becomes
	\begin{eqnarray}
	\theta_{n}(1-\theta_{n})\|e_{n}y_n - \mathcal{T}(e_{n}y_n)\|^2 &\leq& e_n\{\|y_n-y_{n+1}\|^2 + 2\|y_n-y_{n+1}\|\|y_{n+1}-y\|\} \nonumber\\
	 &+&(1-e_n) \|y\|^2 -e_n (1-e_n) \|y_n\|^2. 
	\end{eqnarray}
Using Step 1, Step 2 and condition \textbf{(ii)} in Theorem \ref{P4A3.1}, we obtain $\theta_{n}(1-\theta_{n})\|e_{n}y_n - \mathcal{T}(e_{n}y_n)\|^2 \to 0.$\\

		Now 	\begin{eqnarray}
			\|y_n-\mathcal{T}y_n\| &=&\|y_n-e_{n}y_n+e_{n}y_n-\mathcal{T}(e_{n}y_n)+\mathcal{T}(e_{n}y_n)-\mathcal{T}y_n\| \nonumber\\
			&\leq& \|y_n-e_{n}y_n\|+\|e_{n}y_n-\mathcal{T}(e_{n}y_n)\|+\|\mathcal{T}(e_{n}y_n)-\mathcal{T}y_n\|  \nonumber\\
			&\leq& 2(1-e_{n}) \|y_n\|+\|e_{n}y_n-\mathcal{T}(e_{n}y_n)\| \to 0 \text{ as } n \to \infty. \nonumber
		\end{eqnarray}
		Observe that
		\begin{eqnarray}
			\|y_n-\mathcal{S}y_n\| &\leq& \|y_n-y_{n+1}\|+\|y_{n+1}-\mathcal{S}y_n\| \nonumber\\
			&=& \|y_n-y_{n+1}\|+\|\mathcal{S}[(1-\theta_n)e_{n}y_n + \theta_n \mathcal{T}(e_{n}y_n)]-\mathcal{S}y_n\| \nonumber\\
			&\leq&\|y_n-y_{n+1}\|+\|(1-\theta_n)e_{n}y_n + \theta_n \mathcal{T}(e_{n}y_n)-y_n\| \nonumber\\
			&\leq&\|y_n-y_{n+1}\|+(1-\theta_n)\|e_{n}y_n-y_n\| + \theta_n \|\mathcal{T}(e_{n}y_n)-y_n\| \nonumber\\
			&\leq&\|y_n-y_{n+1}\|+(1-\theta_n)(1-e_{n})\|y_n\|+\theta_n \|\mathcal{T}(e_{n}y_n)-\mathcal{T}y_n+\mathcal{T}y_n-y_n\| \nonumber\\
			&\leq&\|y_n-y_{n+1}\|+(1-\theta_n)(1-e_{n})\|y_n\|+\theta_n\|e_{n}y_n-y_n\|+\theta_{n}\|\mathcal{T}y_n-y_n\| \nonumber\\
			&=&\|y_n-y_{n+1}\|+(1-e_{n})\|y_n\|+\theta_n \|\mathcal{T}y_n-y_n\| \to 0 \text{ as } n \to \infty. \nonumber
		\end{eqnarray}
		Since $e_{n} \to 1$ and  $\|y_n-y_{n-1}\|$ and $\|y_n-\mathcal{T}y_n\| \to 0 \text{ as } n \to \infty,$
		we have $\|y_n-\mathcal{S}y_n\| \to 0.$
		
		\item[Step 4.] \label{C3.4}$\{y_n\}$ converges strongly to $\bar{y}=\operatorname{proj}_{\Omega}(0).$\\
		
		From (\ref{P4e1}), we set
		\begin{eqnarray}
			\|y_{n+1}-\bar{y}\| &\leq& \|\mathcal{S}[(1-\theta_n)e_{n}y_n + \theta_n \mathcal{T}(e_{n}y_n)]-\bar{y}\|
			\nonumber\\
			&=& \|(1-\theta_n)e_{n}y_n + \theta_n \mathcal{T}(e_{n}y_n)-\bar{y}\| \nonumber\\
			&=& \|(1-\theta_n)(e_{n}y_n-\bar{y}) + \theta_n \mathcal{T}((e_{n}y_n)-\bar{y})\| \nonumber\\
			&\leq& \|e_{n}y_n -\bar{y}\|.
		\end{eqnarray}
		Hence
		\begin{eqnarray}\label{P4e3.4}
			\|y_{n+1}-\bar{y}\|^2 &\leq& \|e_{n}y_n -\bar{y}\|^2\nonumber\\
			&\leq& \|e_{n}(y_n -\bar{y})-(1-e_{n})\bar{y}\|^2 \nonumber\\
			&\leq& e_{n}^2 \|y_n -\bar{y}\|^2+2 e_n(1-e_{n}) \langle -\bar{y}, y_n-\bar{y} \rangle+(1-e_{n})^2\|\bar{y}\|^2 \nonumber\\
			&\leq& e_{n} \|y_n -\bar{y}\|^2+2e_n(1-e_{n})  \langle -\bar{y}, y_n-\bar{y} \rangle+(1-e_{n})\|\bar{y}\|^2.
		\end{eqnarray}	
		Next we show that
		\begin{equation}\label{P4e3.5}
			\limsup_{n\to \infty} \langle -\bar{y}, y_n-\bar{y} \rangle\leq 0.
		\end{equation}
		Contrarily assume a real number $l$ and a subsequence $\{y_{n_j}\}$ of $\{y_n\}$ satisfying
		\begin{equation}
			\langle -\bar{y}, y_{n_j}-\bar{y}\rangle\geq l > 0 ~ \forall j\in \mathbb{N}.
		\end{equation}
		Since $\{y_n\}$ is bounded, there exists a subsequence $\{y_{n_j}\}$ which converges weakly to an element $y\in \mathcal{H}.$ Lemma \ref{P4L2.1} alongwith Step \ref{C3.2} implies that $y\in \Omega.$ By using variational characterazation of projection, we can easily derive
		\begin{equation}
			\lim_{j\to\infty}\langle -\bar{y}, y_{n_j}-\bar{y}\rangle = \langle -\bar{y}, y-\bar{y}  \rangle \leq 0,
		\end{equation}
		which is a contradiction. Thus, (\ref{P4e3.5}) holds and
		\begin{eqnarray}
			\limsup_{n\to \infty} \left( 2 e_n \langle -\bar{y}, y_n-\bar{y} \rangle+(1-e_{n})\|\bar{y}\|^2 \right)  \leq 0.
		\end{eqnarray}
		Consider $a_n=\|y_n-\bar{y}\|^2, b_n= 2 e_n \langle -\bar{y}, y_n-\bar{y} \rangle+(1-e_{n})\|\bar{y}\|^2, \epsilon_n= 0$ and $\theta_{n}= 1-e_{n}$ in (\ref{P4e3.4}) and apply Lemma \ref{P4L2.2}, we get the desired conclusion.
	\end{enumerate}
\end{proof}

\begin{Corollary}
	\label{c3.1}  Let $R_1, R_2:\mathcal{H}\to \mathcal{H}$ be $\alpha_1,\alpha_2
	$-averaged operators respectively, such that $\operatorname{Fix}{(R_1)}\cap \operatorname{Fix}(R_2) \neq
	\emptyset$. For $y_1\in \mathcal{H}$, let $\{y_n\}$ be sequence in $\mathcal{H}$ defined by
	\begin{equation}\label{key}
		y_{n+1}=R_2\{e_n y_n +\theta_{n}(R_1 (e_{n}y_n)-e_{n}y_n)\}\ \ \forall n\in
		\mathbb{N},
	\end{equation}
	where $\{\theta_{n}\}$ and $\{e_{n}\}$ are real sequences satisfy the
	condition (\textbf{i}) given in Theorem \ref{P4T3.1} and the conditions:
	$$0<\underline{\Theta }\leq \alpha _{1}\theta _{n}\leq
	\overline{\Theta }<1 \textrm{ for all } n\in \mathbb{N}\textrm{ and  }\sum_{n=1}^{\infty} |\theta
	_{n}-\theta _{n-1}|<\infty. $$
	Then the sequence $\{y_n\}$ converges strongly to $\operatorname{proj}_{\operatorname{Fix}{(R_1)}\cap
		\operatorname{Fix}(R_2)}(0).$
\end{Corollary}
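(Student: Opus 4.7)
The plan is to reduce Corollary \ref{c3.1} directly to Theorem \ref{P4T3.1} by exploiting the averaged-operator decomposition of $R_1$. By definition of $\alpha_1$-averagedness there exists a nonexpansive operator $\mathcal{T}:\mathcal{H}\to\mathcal{H}$ such that $R_1=(1-\alpha_1)Id+\alpha_1\mathcal{T}$, and from this one immediately checks that $\operatorname{Fix}(R_1)=\operatorname{Fix}(\mathcal{T})$, since $R_1 y=y$ is equivalent to $\alpha_1(\mathcal{T} y-y)=0$ and $\alpha_1>0$. Likewise $R_2$ is nonexpansive (being $\alpha_2$-averaged), so it will play the role of $\mathcal{S}$ in Algorithm \ref{P4A3.1}.

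Next I would substitute the decomposition of $R_1$ into the iteration and collect coefficients. Writing $z_n:=e_n y_n$ for brevity, a direct expansion yields
\begin{equation*}
e_n y_n+\theta_n\bigl(R_1(e_n y_n)-e_n y_n\bigr)=(1-\theta_n)z_n+\theta_n R_1 z_n=(1-\alpha_1\theta_n)z_n+\alpha_1\theta_n\mathcal{T}(z_n).
\end{equation*}
Introducing the rescaled parameter $\theta_n':=\alpha_1\theta_n$, the iteration of Corollary \ref{c3.1} becomes
\begin{equation*}
y_{n+1}=R_2\bigl((1-\theta_n')e_n y_n+\theta_n'\mathcal{T}(e_n y_n)\bigr),
\end{equation*}
which is precisely Algorithm \ref{P4A3.1} driven by the nonexpansive operators $\mathcal{S}=R_2$ and $\mathcal{T}$.

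It then remains to verify that the sequences $\{e_n\}$ and $\{\theta_n'\}$ satisfy the hypotheses of Theorem \ref{P4T3.1}. Condition (\textbf{i}) on $\{e_n\}$ is assumed verbatim. For $\{\theta_n'\}$, the hypothesis $0<\underline{\Theta}\le\alpha_1\theta_n\le\overline{\Theta}<1$ gives exactly the two-sided bound $\underline{\Theta}\le\theta_n'\le\overline{\Theta}$ required by condition (\textbf{ii}), while $\sum_{n\ge 1}|\theta_n'-\theta_{n-1}'|=\alpha_1\sum_{n\ge 1}|\theta_n-\theta_{n-1}|<\infty$. Finally $\operatorname{Fix}(\mathcal{T})\cap\operatorname{Fix}(R_2)=\operatorname{Fix}(R_1)\cap\operatorname{Fix}(R_2)\ne\emptyset$ by assumption, so Theorem \ref{P4T3.1} applies and yields strong convergence of $\{y_n\}$ to $\operatorname{proj}_{\operatorname{Fix}(R_1)\cap\operatorname{Fix}(R_2)}(0)$. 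There is no substantive obstacle beyond checking that the rescaling $\theta_n\mapsto\alpha_1\theta_n$ preserves every condition of Theorem \ref{P4T3.1}; once this bookkeeping is in place, the corollary follows immediately.
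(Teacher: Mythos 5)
Your reduction is correct and is evidently the argument the paper intends (the paper states Corollary \ref{c3.1} without proof, but the hypothesis $0<\underline{\Theta}\le\alpha_1\theta_n\le\overline{\Theta}<1$ only makes sense as the rescaled condition (\textbf{ii}) of Theorem \ref{P4T3.1} after writing $R_1=(1-\alpha_1)Id+\alpha_1\mathcal{T}$). The algebraic identity $(1-\theta_n)z_n+\theta_n R_1 z_n=(1-\alpha_1\theta_n)z_n+\alpha_1\theta_n\mathcal{T}(z_n)$, the identification $\operatorname{Fix}(R_1)=\operatorname{Fix}(\mathcal{T})$, and the verification of the summability condition for $\theta_n'=\alpha_1\theta_n$ are all exactly the bookkeeping needed, so the corollary follows from Theorem \ref{P4T3.1} as you claim.
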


\section{Tikhonov Regularized Forward-Backward-type Algorithms}\label{sec4}
In this section, we propose a forward-backward algorithm based on Algorithm \ref{P4A3.1} to simultaneously solve the monotone inclusion problems of the sum of two maximally monotone operators in which one is single-valued. Further, we also propose a forward-backward-type primal-dual algorithm based on Algorithm \ref{P4A3.1}  to solve a complexly structured monotone inclusion problem containing composition with linear operators and parallel-sum operators.

\subsection{Tikhonov Regularized Forward-Backward Algorithm}
Let $A_1,A_2: \mathcal{H} \to 2^\mathcal{H}$ be maximally monotone operators and $B_1,B_2: \mathcal{H} \to \mathcal{H}$ be $\alpha_1, \alpha_2$-cocoercive operators. We consider the monotone inclusion problem
\begin{equation}\label{P4Al3.1}
	\text{ find }x \in \mathcal{H} \text{ such that } 0 \in (A_1 +B_1)x \cap (A_2+B_2)x.
\end{equation}
We propose a forward-backward algorithm to solve the monotone inclusion problem (\ref{P4Al3.1}) such that generated sequence converges strongly to the solution set of the Problem (\ref{P4Al3.1}).
\begin{Theorem}\label{T4.1}
	Suppose $\operatorname{Zer}(A_1+B_1)\cap \operatorname{Zer}(A_2+B_2) \neq \emptyset$ and  $\gamma_1 \in (0,2\alpha_1)\text{ and }\gamma_2 \in (0,2\alpha_2).$ For $y_1 \in \mathcal{H}$, consider the forward-backward algorithm defined as follows:
	\begin{equation}\label{e4.1}
		y_{n+1}=J_{\gamma_2A_2}(Id-\gamma_2B_2)\left\lbrace (1-\theta_n)e_{n}y_n + \theta_n J_{\gamma_1 A_1}(e_n y_n-\gamma_1B_1(e_{n}y_n))\right\rbrace  \forall n \in \mathbb{N}.
	\end{equation}
	where $\{\theta_{n}\}$ and $\{e_{n}\}$ are real sequences satisfy the
	condition (\textbf{i}) given in Theorem \ref{P4T3.1} and the conditions:
	$$0<\underline{\Theta }\leq  \frac{2\alpha_1}{4\alpha_1-\gamma_1} \theta _{n}\leq
	\overline{\Theta }<1 \textrm{ for all } n\in \mathbb{N} \textrm{ and } \sum_{n=1}^{\infty}|\theta
	_{n}-\theta _{n-1}|<\infty .$$
	Then $\{y_n\}$ converges strongly to $\operatorname{proj}_{\operatorname{Zer}(A_1+B_1)\cap \operatorname{Zer}(A_2+B_2)}(0).$
\end{Theorem}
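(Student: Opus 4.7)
The plan is to reduce Theorem \ref{T4.1} to the common fixed point theorem (Corollary \ref{c3.1}) by recognizing the iteration (\ref{e4.1}) as an instance of the normal S-iteration with Tikhonov regularization applied to two suitably chosen averaged operators.

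First, I would invoke the standard fact that for a maximally monotone operator $A:\mathcal{H}\to 2^{\mathcal{H}}$ and an $\alpha$-cocoercive operator $B:\mathcal{H}\to\mathcal{H}$, if $\gamma\in(0,2\alpha)$, then the forward-backward operator $J_{\gamma A}(Id-\gamma B)$ is $\frac{2\alpha}{4\alpha-\gamma}$-averaged and satisfies $\operatorname{Fix}\bigl(J_{\gamma A}(Id-\gamma B)\bigr)=\operatorname{Zer}(A+B)$. Applying this, I set
\[
R_{1}:=J_{\gamma_{1}A_{1}}(Id-\gamma_{1}B_{1}),\qquad R_{2}:=J_{\gamma_{2}A_{2}}(Id-\gamma_{2}B_{2}),
\]
which are $\widetilde{\alpha}_{1}:=\frac{2\alpha_{1}}{4\alpha_{1}-\gamma_{1}}$-averaged and $\widetilde{\alpha}_{2}:=\frac{2\alpha_{2}}{4\alpha_{2}-\gamma_{2}}$-averaged respectively, with $\operatorname{Fix}(R_{1})=\operatorname{Zer}(A_{1}+B_{1})$ and $\operatorname{Fix}(R_{2})=\operatorname{Zer}(A_{2}+B_{2})$.

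Next, I would observe that the iteration (\ref{e4.1}) rewrites exactly as
\[
y_{n+1}=R_{2}\bigl\{(1-\theta_{n})e_{n}y_{n}+\theta_{n}R_{1}(e_{n}y_{n})\bigr\}=R_{2}\bigl\{e_{n}y_{n}+\theta_{n}\bigl(R_{1}(e_{n}y_{n})-e_{n}y_{n}\bigr)\bigr\},
\]
which is precisely the update in Corollary \ref{c3.1} with $R_{1},R_{2}$ as above. The solution-set assumption $\operatorname{Zer}(A_{1}+B_{1})\cap\operatorname{Zer}(A_{2}+B_{2})\neq\emptyset$ translates to $\operatorname{Fix}(R_{1})\cap\operatorname{Fix}(R_{2})\neq\emptyset$, verifying that hypothesis of the corollary.

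It remains to match the parameter conditions. Condition (\textbf{i}) on $\{e_n\}$ is unchanged. For $\{\theta_n\}$, the requirement in Corollary \ref{c3.1} is $0<\underline{\Theta}\le\widetilde{\alpha}_{1}\theta_{n}\le\overline{\Theta}<1$, which is exactly the present hypothesis $0<\underline{\Theta}\le\frac{2\alpha_{1}}{4\alpha_{1}-\gamma_{1}}\theta_{n}\le\overline{\Theta}<1$; the summability of $|\theta_{n}-\theta_{n-1}|$ is also identical. Invoking Corollary \ref{c3.1} therefore yields strong convergence of $\{y_n\}$ to $\operatorname{proj}_{\operatorname{Fix}(R_{1})\cap\operatorname{Fix}(R_{2})}(0)=\operatorname{proj}_{\operatorname{Zer}(A_{1}+B_{1})\cap\operatorname{Zer}(A_{2}+B_{2})}(0)$, which is the conclusion. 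The only real subtlety here is the bookkeeping of the averaged constant $\frac{2\alpha_{1}}{4\alpha_{1}-\gamma_{1}}$ of the forward-backward operator; once this is in place the theorem is an immediate corollary of the fixed point result, so no fresh convergence analysis is required.
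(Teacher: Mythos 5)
Your proposal is correct and follows essentially the same route as the paper: identify $T_i=J_{\gamma_iA_i}(Id-\gamma_iB_i)$ as a $\frac{2\alpha_i}{4\alpha_i-\gamma_i}$-averaged operator with $\operatorname{Fix}(T_i)=\operatorname{Zer}(A_i+B_i)$, rewrite (\ref{e4.1}) as the iteration of Corollary \ref{c3.1}, and invoke that corollary. No discrepancies to report.
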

\begin{proof}
	Set $T_1=J_{\gamma_1A_1}(Id-\gamma_1B_1)$ and $T_2=J_{\gamma_2A_2}(Id-\gamma_2B_2),$ then algorithm (\ref{e4.1}) can be rewritten as:
	\begin{equation}
	y_{n+1}=T_2\{(1-\theta_{n})e_n y_n +\theta_{n}T_1 (e_{n}y_n)\} \ \forall \ n \in \mathbb{N}.
	\end{equation}
	Since $J_{\gamma_1A_1}$ is $\frac{1}{2}$-cocoercive \cite[Corollary 23.8]{Bauschke2011} and $Id-\gamma_1B_1$ is $\frac{\gamma_1}{2\alpha_1}$-averaged \cite[proposition 4.33]{Bauschke2011}, $T_1$ is $\frac{2\alpha_1}{4\alpha_1 -\gamma_1}$-averaged. Using the fact that $\operatorname{Zer}(A_i+B_i)= \operatorname{Fix}(T_i), i=1,2$ and assumption  $\operatorname{Zer}(A_1+B_1)\cap \operatorname{Zer}(A_2+B_2) \neq \emptyset$ \cite[Proposition 25.1]{Bauschke2011}, $\operatorname{Fix}(T_1) \cap \operatorname{Fix}(T_2) \neq \emptyset.$ Therefore, Theorem \ref{T4.1} follows from Corollary \ref{c3.1}.
\end{proof}

Further, we consider the following minimization problem and propose a proximal-point-type algorithm to solve it.
\begin{Problem}\label{P4P4.1}
	Consider strictly positive real numbers $\beta_1,\beta_2.$ Let $f_1, f_2 :\mathcal{H} \to \mathbb{R} \cup \{\infty\}$ be proper convex lower semicontinuous functions and $g_1,g_2: \mathcal{H} \to \mathbb{R}$ be convex and Frechet-differentiable functions with $\frac{1}{\beta_1},\frac{1}{\beta_2}$-Lipschitz continuous gradients, respectively. The problem is to find a point $y\in \mathcal{H}$ satisfying
	\begin{equation}\label{P4.1}
	y \in 	argmin_{x\in \mathcal{H}}\left\lbrace (f_1+g_1)(x)\right\rbrace  \bigcap  argmin_{x\in \mathcal{H}}\left\lbrace (f_2+g_2)(x)\right\rbrace .
	\end{equation}
\end{Problem}
\begin{Theorem}\label{c4.1}
	Consider the functions $f_1, f_2, g_1 \text{ and }g_2$ are as in Problem \ref{P4P4.1}. Let $argmin(f_1+g_1) \cap argmin(f_2+g_2) \neq \emptyset.$ For $\gamma_1 \in (0,2\beta_1] \text{ and } \gamma_2 \in (0,2\beta_2],$ consider an algorithm with initial point $y_1 \in \mathcal{H},$
	\begin{equation}\label{e4.2}
		y_{n+1}=prox_{\gamma_2f_2}(Id-\gamma_2\nabla g_2)\{(1-\theta_n)e_{n}y_n + \theta_n prox_{\gamma_1f_1}(e_n y_n-\gamma_1\nabla g_1(e_{n}y_n))\}  \  \ \forall n \in \mathbb{N},
	\end{equation}
	where $\theta_n \in (0,1] \text{ and }e_{n} \in (0,\frac{4\beta_1-\gamma_1}{2\beta_1})$ satisfy the condition (\textbf{i}) in Theorem \ref{P4T3.1} and the conditions:
	$$0< \underline{\Theta} \leq \frac{2\beta_1}{4\beta_1-\gamma_1}\theta_{n} \leq \overline{\Theta } <1 \textrm{ and } \sum_{n=1}^{\infty} |\theta_{n}-\theta_{n-1}| < \infty.$$
	Then $\{y_n\}$ converges strongly to minimal norm solution $y$ of Problem \ref{P4P4.1}.
\end{Theorem}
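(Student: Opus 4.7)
The plan is to reduce this minimization problem to the monotone inclusion setting already handled by Theorem~\ref{T4.1}. The key step is to recognize that the proximal--gradient recursion (\ref{e4.2}) is precisely the forward--backward iteration (\ref{e4.1}) applied to the operators $A_i := \partial f_i$ and $B_i := \nabla g_i$ for $i=1,2$. Concretely, I would first verify the structural hypotheses: since each $f_i \in \Gamma(\mathcal{H})$, the subdifferential $\partial f_i$ is maximally monotone; and by the Baillon--Haddad theorem, the fact that $\nabla g_i$ is the gradient of a convex function with $\frac{1}{\beta_i}$-Lipschitz continuous gradient implies that $\nabla g_i$ is $\beta_i$-cocoercive. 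Thus the cocoercivity constants $\alpha_i$ of Theorem~\ref{T4.1} can be taken equal to $\beta_i$.

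Next, I would translate the geometry of the solution set. By Fermat's rule and the sum rule for subdifferentials (applicable because $g_i$ is Fr\'echet differentiable on the whole space), one has
\begin{equation*}
\operatorname{argmin}(f_i + g_i) = \operatorname{Zer}(\partial f_i + \nabla g_i) = \operatorname{Zer}(A_i + B_i),
\end{equation*}
so the assumption $\operatorname{argmin}(f_1+g_1)\cap \operatorname{argmin}(f_2+g_2)\neq\emptyset$ gives the required nonemptiness in Theorem~\ref{T4.1}. Using the identity $J_{\gamma_i \partial f_i} = \operatorname{prox}_{\gamma_i f_i}$, algorithm (\ref{e4.2}) becomes exactly the iteration (\ref{e4.1}) with these $A_i, B_i$.

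Finally, I would check that the parameter conditions match. The stepsizes $\gamma_i \in (0,2\beta_i]$ correspond to $\gamma_i \in (0,2\alpha_i)$ (with $\alpha_i=\beta_i$) in Theorem~\ref{T4.1}; the sequence $\{e_n\}$ satisfies condition \textbf{(i)} of Theorem~\ref{P4T3.1} by hypothesis; and the bound $0<\underline{\Theta}\le \frac{2\beta_1}{4\beta_1-\gamma_1}\theta_n \le \overline{\Theta}<1$ together with $\sum|\theta_n - \theta_{n-1}|<\infty$ is exactly the second parameter condition of Theorem~\ref{T4.1} rewritten with $\alpha_1=\beta_1$. Invoking Theorem~\ref{T4.1} then yields
\begin{equation*}
y_n \to \operatorname{proj}_{\operatorname{Zer}(A_1+B_1)\cap\operatorname{Zer}(A_2+B_2)}(0) = \operatorname{proj}_{\operatorname{argmin}(f_1+g_1)\cap \operatorname{argmin}(f_2+g_2)}(0).
\end{equation*}
Since this projection is the element of smallest norm in the common solution set, it is the minimal norm solution of Problem~\ref{P4P4.1}, as claimed.

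Because this is a direct specialization of Theorem~\ref{T4.1}, there is no serious obstacle beyond bookkeeping; the only delicate point is confirming that Baillon--Haddad gives cocoercivity with the same constant $\beta_i$ so that the stepsize range and averaged parameter propagate correctly from the abstract forward--backward setting to this optimization setting.
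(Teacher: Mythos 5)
Your proof follows essentially the same route as the paper's: set $A_i=\partial f_i$, $B_i=\nabla g_i$, use Baillon--Haddad to get $\beta_i$-cocoercivity of $\nabla g_i$, identify $\operatorname{argmin}(f_i+g_i)$ with $\operatorname{Zer}(\partial f_i+\nabla g_i)$, and invoke Theorem~\ref{T4.1}; you even add the (correct) identification of the limit as the minimal norm solution, which the paper leaves implicit. The only caveat, inherited from the theorem statement itself and glossed over by the paper as well, is that the endpoint case $\gamma_i=2\beta_i$ falls outside the open range $(0,2\alpha_i)$ required by Theorem~\ref{T4.1}.
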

\begin{proof}
	Consider $A_1=\partial f_1, A_2=\partial f_2, B_1= \nabla g_1, B_1=\nabla g_2$. Since $$\operatorname{Zer}(\partial f_i +\nabla g_i)= argmin (f_i +g_i), \ i=1,2$$ and $\nabla g_1,\nabla g_2$ are $\beta_1,\beta_2$-cocoercive, respectively (Ballion-Hadded Theorem \cite[Corollary 16.18]{Bauschke2011}). Thus, by Theorem \ref{T4.1}, $\{y_n\}$ converges strongly to a point $y$ in $argmin(f_1+g_1) \cap argmin(f_2+g_2)$.
\end{proof}

\subsection{Tikhonov Regularized Forward-Backward-type Primal-Dual Algorithm}

\begin{Problem}\label{P6.1}
	Suppose $\Omega_1, \dots,\Omega_m$ are real Hilbert spaces. Consider the following operators:
	\begin{enumerate}
		$\bullet  A,B:\mathcal{H} \to 2^\mathcal{H}$ are maximally monotone operators,\\
		$\bullet C,D:\mathcal{H} \to \mathcal{H}$ are $\mu_1,\mu_2$-cocoercive operators, respectively, $\mu_1,\mu_2> 0,$\\
		$\bullet P_i,Q_i,R_i,  S_i:\Omega_i \to 2^{\Omega_i}$ are maximally monotone operators such that $Q_i$ is $\nu_i$-strongly monotone and  $S_i$ is $\delta_i$-strongly monotone, $\nu_i,\delta_i>0,$ $i=1, \dots, m$,\\
		$\bullet$ nonzero continuous linear operators $L_i: \mathcal{H} \to \Omega_i, i=1, \dots, m.$
	\end{enumerate}
	The primal inclusion problem is to find $\bar{y} \in \mathcal{H}$ satisfying
	\begin{eqnarray}
		& 0\in A \bar{y} +\sum_{i=1}^{m} L_i^{*}(P_i \square Q_i)(L_i \bar{y})+C\bar{y}\nonumber\\
		&\text{and}\nonumber\\
		&0\in B\bar{y} +\sum_{i=1}^{k} L_i ^*(R_i \square S_i)(L_i \bar{y})+D\bar{y} \nonumber
	\end{eqnarray}
	together with dual inclusion problem
	\begin{eqnarray}\label{e6.11}{find \ \ \bar{v}_1\in \varOmega_1, \dots, \bar{v}_m \in \varOmega_m \text{ such that } \exists y \in \mathcal{H} \text{ and }
			\left\{
			\begin{array}{lc@{}c@{}r}
				-\sum_{i=1}^{m}L_i^* \bar{v}_i \in Ay+Cy\\
				\bar{v}_i \in (P_i \square Q_i)(L_i y)\\
				\text{ and }\\
				-\sum_{i=1}^{m}L_i^* \bar{v}_i \in By+Dy\\
				\bar{v}_i \in (R_i \square S_i)(L_i y), \ \ \ i =1, \dots m.
			\end{array}\right.}
	\end{eqnarray}
\end{Problem}
A point $(\bar{y}, \bar{v}_1,\dots, \bar{v}_m) \in \mathcal{H}\times \varOmega_1 \times \cdots \times \varOmega_m$ be a primal-dual solution of Problem \ref{P6.1} if it satisfies the following:\\
\begin{eqnarray}{
		\left\{
		\begin{array}{lc@{}c@{}r}
			-\sum_{i=1}^{m}L_i^* \bar{v}_i \in A\bar{y}+C\bar{y},\\
			-\sum_{i=1}^{m}L_i^* \bar{v}_i \in B\bar{y}+D\bar{y}, \\
			\bar{v}_i \in (P_i \square Q_i)(L_i \bar{y}),\\
			\bar{v}_i \in (R_i \square S_i)(L_i \bar{y}) \ \ \ i =1,\dots, m.
		\end{array}\right.}
\end{eqnarray}

\begin{Theorem}\label{P4T6.1}
	Consider the operators as in Problem \ref{P6.1}. Assume
	\begin{equation}\label{e6.4}
		0\in ran \left( A+\sum_{i=1}^{m}L_i ^* \circ (P_i \square Q_i)\circ L_i +C\right)  \bigcap ran \left( B+\sum_{i=1}^{m}L_i ^* \circ (R_i \square S_i)\circ L_i +D\right).
	\end{equation}
	Let $\tau,\sigma_1,\dots, \sigma_m >0 $ such that
	\begin{equation*}
		2\rho\min\{\beta_1, \beta_2\} \geq 1,
	\end{equation*}
	where
	\begin{eqnarray*}
		\rho=\min\left\lbrace \frac{1}{\tau},\frac{1}{\sigma_1},\dots, \frac{1}{\sigma_m}\right\rbrace \left(1-\sqrt{\tau\sum_{i=1}^{m}\sigma_i\|L_i\|^2}\right)\\
		\beta_1=\min\{\mu_1,\nu_1,\dots,\nu_m\} \textrm{ and } \beta_2= \min\{\mu_2,\delta_1,\dots,\delta_m\}.
	\end{eqnarray*}
	
	Consider the algorithm with initial point $(y_1,v_{1,1},\dots, v_{m,1}) \in \mathcal{H} \times \varOmega_1 \times \cdots \times\varOmega_m$ and defined by

	\begin{algorithm}[H]
		\SetAlgoLined

		\KwIn{\begin{enumerate}
				\item initial points $(y_1,v_{1,1},\dots, v_{m,1}) \in \mathcal{H} \times \varOmega_1 \times \cdots \times\varOmega_m,$
				\item real numbers $\tau, \sigma_i > 0$, $i=1,2,...,m$ be such that $\tau\sum_{i=1}^{m} \sigma_i \|L_i\|^2 < 4, $
				\item $\theta_n \in (0,\frac{4\beta_1 \rho-1}{2\beta_1 \rho}], e_{n} \in (0,1)$ .
			\end{enumerate}	
		}
		\SetKwBlock{For}{For}{}
		\SetKwProg{}{}{}{}
		\For($n\in \mathbb{N}$; ){
			$p_n = J_{\tau A}\left[e_{n}y_n-\tau \left(e_{n}\sum_{i=1}^{m}L_i^* v_{i,n}+C(e_{n}y_n)\right)\right]$\\
			$u_n= e_{n}y_n+\theta_{n}(p_n-e_{n}y_n)$\\
			
			\For($i=1,\dots, m$; ){
				
				$q_{i,n}=J_{\sigma_iP_i^{-1}}\left[e_{n}v_{i,n}+\sigma_i(L_i(2p_n-e_{n}y_n)-Q_i^{-1}(e_{n}v_{i,n}))\right]$\\
				$u_{i,n}=e_{n}v_{i,n}+\theta_{n}(q_{i,n}-e_{n}v_{i,n})$\\
			}
			
			$y_{n+1}=J_{\tau B}\left[u_n-\tau \left(\sum_{i=1}^{m}L_i^* u_{i,n}+D(u_n)\right)\right]$\\
			$v_{i,n+1}=J_{\sigma_iR_i^{-1}}\left[u_{i,n}+\sigma_i(L_i(2y_{n+1}-u_n)-S_i^{-1}(u_{i,n}))\right]$

		}

		\KwOut{$(y_{n+1},v_{1,n+1},\dots,v_{m,n+1})$}		
		\caption{To optimize the complexly structured Problem \ref{P6.1} }\label{P4Al4.11}
	\end{algorithm}
	
	where sequences $\{\theta_{n}\}$ and $\{e_{n}\}$ satisfy the condition (\textbf{i}) and the conditions:
	$$0< \underline{\Theta} \leq \frac{2\beta_1 \rho}{4\beta_1 \rho-1}\theta_{n} \leq \overline{\Theta } <1   \textrm{ and } \sum_{n=1}^{\infty} |\theta_{n}-\theta_{n-1}| < \infty.$$
	Then there exists $(\bar{y},\bar{v}_1,\dots,\bar{v}_m)\in  \mathcal{H}\times \varOmega_1 \times \cdots \times \varOmega_m $ such that sequence $\{(y_n,v_{1,n},\dots,v_{m,n})\}$ converges strongly to  $(\bar{y},\bar{v}_1,\dots,\bar{v}_m)$ and satisfies the Problem \ref{P6.1}.
\end{Theorem}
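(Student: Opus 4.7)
The plan is to reduce Theorem \ref{P4T6.1} to Corollary \ref{c3.1}, following the same template used to derive Theorem \ref{T4.1}: reinterpret the primal-dual sweeps as two averaged operators acting on a renormed product space, whose common fixed points coincide with the primal-dual solutions of Problem \ref{P6.1}.

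First I would set $\mathcal{K} = \mathcal{H}\times \varOmega_1\times\cdots\times \varOmega_m$ equipped with the weighted inner product
\[
\langle (y,v_1,\dots,v_m),(y',v_1',\dots,v_m')\rangle_\mathcal{K} = \tfrac{1}{\tau}\langle y,y'\rangle + \textstyle\sum_{i=1}^{m}\tfrac{1}{\sigma_i}\langle v_i,v_i'\rangle,
\]
which is the standard metric used by Vũ and Condat to analyse primal-dual splittings. Associated with the ``upper'' inclusion I would define a maximally monotone operator $\mathcal{M}_1$ on $\mathcal{K}$ incorporating $A$, $P_i^{-1}$ and the coupling through $L_i, L_i^*$, together with the forward operator $\mathcal{C}_1:(y,v_1,\dots,v_m)\mapsto (Cy,Q_1^{-1}v_1,\dots,Q_m^{-1}v_m)$; analogously define $\mathcal{M}_2,\mathcal{C}_2$ from $B,D,R_i,S_i$. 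A direct verification gives that $\operatorname{Zer}(\mathcal{M}_j+\mathcal{C}_j)$ is exactly the set of primal-dual solutions of the $j$th inclusion in Problem \ref{P6.1}, and hypothesis (\ref{e6.4}) together with standard Attouch--Th\'era duality ensures $\operatorname{Zer}(\mathcal{M}_1+\mathcal{C}_1)\cap \operatorname{Zer}(\mathcal{M}_2+\mathcal{C}_2)\neq \emptyset$.

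The next step is to match the algorithmic blocks with forward--backward operators on $(\mathcal{K},\langle\cdot,\cdot\rangle_\mathcal{K})$. Defining
\[
\mathcal{R}_1(y,v_1,\dots,v_m)=(p,q_1,\dots,q_m),
\]
with $p=J_{\tau A}[y-\tau(\sum_i L_i^* v_i + Cy)]$ and $q_i=J_{\sigma_i P_i^{-1}}[v_i+\sigma_i(L_i(2p-y)-Q_i^{-1}v_i)]$, one checks that $\mathcal{R}_1$ coincides with $J_{\mathcal{M}_1}\circ(Id-\mathcal{C}_1)$ in the weighted metric, and similarly for $\mathcal{R}_2$. Using $\mu_j$-cocoercivity of $C,D$ together with $\nu_i,\delta_i$-cocoercivity of $Q_i^{-1},S_i^{-1}$ (via strong monotonicity of $Q_i,S_i$), and absorbing the skew coupling using the norm-bound $\sqrt{\tau\sum_i\sigma_i\|L_i\|^2}$, one obtains that $\mathcal{C}_j$ is $\beta_j\rho$-cocoercive in $\|\cdot\|_\mathcal{K}$. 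Composing the firmly nonexpansive resolvent $J_{\mathcal{M}_j}$ with the $\tfrac{1}{2\beta_j\rho}$-averaged operator $Id-\mathcal{C}_j$ and applying the averaged-composition identity then shows $\mathcal{R}_j$ is $\tfrac{2\beta_j\rho}{4\beta_j\rho-1}$-averaged, which is meaningful because the hypothesis $2\rho\min\{\beta_1,\beta_2\}\geq 1$ ensures the denominator is positive.

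With these identifications, writing $z_n=(y_n,v_{1,n},\dots,v_{m,n})$ the iteration of Algorithm \ref{P4Al4.11} takes exactly the form $z_{n+1}=\mathcal{R}_2\{e_n z_n+\theta_n(\mathcal{R}_1(e_n z_n)-e_n z_n)\}$ prescribed in Corollary \ref{c3.1}. The stepsize range $0<\underline{\Theta}\leq \tfrac{2\beta_1\rho}{4\beta_1\rho-1}\theta_n\leq \overline{\Theta}<1$ in the theorem is precisely the condition $\alpha_1\theta_n\in(\underline{\Theta},\overline{\Theta})$ required by Corollary \ref{c3.1} with $\alpha_1=\tfrac{2\beta_1\rho}{4\beta_1\rho-1}$; conditions on $\{e_n\}$ transfer unchanged. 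Invoking Corollary \ref{c3.1} on $(\mathcal{K},\|\cdot\|_\mathcal{K})$ yields strong convergence of $\{z_n\}$ to $\operatorname{proj}_{\operatorname{Fix}(\mathcal{R}_1)\cap\operatorname{Fix}(\mathcal{R}_2)}(0)$, which by the first step is a primal-dual solution of Problem \ref{P6.1}. I expect the main technical obstacle to be the metric-dependent cocoercivity estimate for $\mathcal{C}_j$: establishing the constant $\beta_j\rho$ requires carefully combining the skew-absorption inequality $\langle (0,L_i y-L_iy';-L_i^*(v_i-v_i'))\rangle\leq \sqrt{\tau\sum_i\sigma_i\|L_i\|^2}\,\|\cdot\|_\mathcal{K}^2$ with the individual cocoercivities, and it is this calculation that pins down the admissible ranges of $\tau,\sigma_i,\theta_n$; the remaining reformulations are routine once the weighted inner product is in place.
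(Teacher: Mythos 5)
Your strategy is the paper's own: embed the iteration into the product space, renorm it, recognize the two sweeps as forward--backward operators $J_{\mathbf{A}_j}\circ(Id-\mathbf{B}_j)$ with $\mathbf{B}_j$ being $\beta_j\rho$-cocoercive and hence each sweep $\tfrac{2\beta_j\rho}{4\beta_j\rho-1}$-averaged, and invoke the two-operator fixed-point result (the paper routes this through Theorem \ref{T4.1}, which is itself Corollary \ref{c3.1}; you go to Corollary \ref{c3.1} directly --- same thing). The averagedness computation, the role of $2\rho\min\{\beta_1,\beta_2\}\geq 1$, and the matching of the step-size window to $\alpha_1\theta_n$ are all correct and identical to the paper.

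There is, however, one concrete error: the inner product you specify, $\langle\cdot,\cdot\rangle_{\mathcal K}=\tfrac{1}{\tau}\langle y,y'\rangle+\sum_i\tfrac{1}{\sigma_i}\langle v_i,v_i'\rangle$, is only the diagonal part of the correct preconditioner. The metric that makes the identification work is $\langle \mathbf{x},\mathbf{V}\mathbf{y}\rangle$ with the \emph{non-diagonal}, self-adjoint, $\rho$-strongly positive operator $\mathbf{V}:(x,u_1,\dots,u_m)\mapsto\bigl(\tfrac{x}{\tau}-\sum_{i}L_i^*u_i,\ \tfrac{u_1}{\sigma_1}-L_1x,\dots,\tfrac{u_m}{\sigma_m}-L_mx\bigr)$, exactly as in V\~u's scheme and in the paper. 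The off-diagonal blocks are what produce the reflected term $L_i(2p_n-e_ny_n)$ when you unwind $\mathbf{V}(e_n\mathbf{y}_n-\mathbf{x}_n)-\psi_1(e_n\mathbf{y}_n)\in(\phi_1+\xi)\mathbf{x}_n$; with the purely diagonal weighting, the dual resolvent step would read $J_{\sigma_iP_i^{-1}}[v_i+\sigma_i(L_ip-Q_i^{-1}v_i)]$ up to terms in $q_i$ rather than $v_i$, and your claim that $\mathcal R_1=J_{\mathcal M_1}\circ(Id-\mathcal C_1)$ ``in the weighted metric'' fails as stated. Relatedly, the factor $1-\sqrt{\tau\sum_i\sigma_i\|L_i\|^2}$ in $\rho$ has no source in the diagonal metric --- it is precisely the strong-positivity constant of the full $\mathbf{V}$, and it is $\mathbf{V}^{-1}\psi_j$ (not $\psi_j$ itself) that is $\beta_j\rho$-cocoercive in the $\mathbf{V}$-norm. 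Once you replace your weighting by $\mathbf{V}$ and define $\mathbf{A}_j=\mathbf{V}^{-1}(\phi_j+\xi)$, $\mathbf{B}_j=\mathbf{V}^{-1}\psi_j$, the rest of your argument goes through verbatim and coincides with the paper's proof.
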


\begin{proof}
	Consider the real Hilbert space $\mathcal{K }\equiv$ $\mathcal{H}\times \varOmega_1 \times \cdots \times \varOmega_m$ endowed with inner product
	$$\langle (x,u_1,\dots, u_m),(y,v_1,\dots,v_m)\rangle_{\mathcal{K}} =\langle x,y\rangle_\mathcal{H} +\sum_{i=1}^{m}\langle u_i,v_i \rangle_{\varOmega_i}$$
	and corresponding norm
	$$\|(x,u_1, \dots,u_m)\|_\mathcal{K}=\sqrt{\|x\|_\mathcal{H} ^2+\sum_{i=1}^{m}\|u_i\|^2 _{\varOmega_i} },\ \ \  \forall (x,u_1,\dots u_m),(y,v_1,\dots,v_m) \in \mathcal{K}.$$
	Further we consider following operators on real Hilbert space $\mathcal{K}$:
	
	\begin{enumerate}
		\item $\phi_1: \mathcal{K} \to 2^\mathcal{K}$, defined by $(x,u_1,\dots,u_m)\mapsto (Ax,P_1^{-1}u_1,\dots,P_m^{-1}u_m),$
		\item $\phi_2: \mathcal{K} \to 2^\mathcal{K}$, defined by $(x,u_1,\dots,u_m)\mapsto (Bx,R_1^{-1}u_1,\dots,R_m^{-1}u_m),$
		\item $\xi:\mathcal{K} \to \mathcal{K}$, defined by $(x,u_1,\dots,u_m)\mapsto \left(\sum_{i=1}^{m}L_i ^*u_i,-L_1 x,\dots, -L_m x\right),$
		\item $\psi_1:\mathcal{K} \to \mathcal{K}$, defined by $(x,u_1,\dots,u_m)\mapsto\left(Cx,Q_1^{-1}u_1,\dots, Q_m^{-1}u_m\right),$
		\item $\psi_2:\mathcal{K} \to \mathcal{K}$, defined by $(x,u_1,\dots,u_m)\mapsto\left(Dx,S_1^{-1}u_1,\dots, S_m^{-1}u_m\right).$
	\end{enumerate}
	These operators are maximally monotone as $A,B,P_i,R_i,Q_i, S_i, \ i=1,\dots,m$ are maximally monotone, $C,D \text{ are }\mu_1,\mu_2$-cocoercive, respectively and $\xi$ is skew-symmetric, i.e.,
	$\xi^*=-\xi$ (\cite[Proposition 20.22, 20.23 and Example 20.30]{Bauschke2011}).
	Now, define the continuous linear operator $\textbf{V}:\mathcal{K} \to \mathcal{K}$ by,
	$$(x,u_1,\dots,u_m) \to \left(\frac{x}{\tau}-\sum_{i=1}^{m}L_i^*u_i, \frac{u_1}{\sigma_1}-L_1x,\dots,\frac{u_m}{\sigma_m}-L_mx\right),$$
	which is self-adjoint and $\rho$-strongly positive, i.e., $\langle \textbf{x},\textbf{Vx}\rangle_{\mathcal{K}} \geq \rho \|\textbf{x}\|_{\mathcal{K}}^2 \ \ \forall \textbf{x}\in \mathcal{K}$ \cite{BC2013}. Therefore inverse of operator $\textbf{V}$ exists and satisfy $\|\textbf{V}^{-1}\|\leq \frac{1}{\rho}.$\\
	Now, consider the sequences
	\begin{eqnarray}{
			\forall n\in \mathbb{N}	\left\{
			\begin{array}{lc@{}c@{}r}
				\textbf{y}_n$=$(y_n,v_{1,n},\dots,v_{m,n}),\\ \textbf{u}_n$=$(u_n,u_{1,n},\dots,u_{m,n}),\\
				\textbf{x}_n =(p_n,q_{1,n},\dots,q_{m,n}).
			\end{array}\right.}
	\end{eqnarray}
	
	By taking into account the sequences $\{\textbf{y}_n \}$,$\{\textbf{x}_n\}$ and $\{\textbf{u}_n\}$ and operator \textbf{V}, Algorithm \ref{P4Al4.11} can be rewritten as
	\begin{eqnarray}\label{e6.22}{
			\forall n\in \mathbb{N}	\left\{
			\begin{array}{lc@{}c@{}r}
				e_{n}\text{\textbf{V}}(\text{\textbf{y}}_n)-\text{\textbf{V}}(\text{\textbf{x}}_n)-\text{\textbf{$\psi_1$}}(e_{n} \text{\textbf{y}}_n) \in (\phi_1+\xi)(\text{\textbf{x}}_n)\\
				\text{\textbf{u}}_n=e_{n}\text{\textbf{y}}_n+\theta_n(\text{\textbf{x}}_n-e_{n}\text{\textbf{y}}_n)\\
				\text{\textbf{V}\textbf{$\textbf{u}_n$}-\textbf{V}\textbf{y}}_{n+1}-\psi_2 \text{\textbf{$\textbf{u}_n$}} \in (\phi_2+\xi)\textbf{y}_{n+1}.
			\end{array}\right.}
	\end{eqnarray}
	On further analysing Algorithm $\ref{P4Al4.11}$, we get
	\begin{eqnarray}\label{e6.22}{
			\forall n\in \mathbb{N}	\left\{
			\begin{array}{lc@{}c@{}r}
				\textbf{x}_n=J_{\textbf{A}_1}(e_{n}\textbf{y}_n-\textbf{B}_1(e_{n}\textbf{y}_n)) \\
				\textbf{u}_n=e_{n}\textbf{y}_n+\theta_n(\textbf{x}_n-e_{n}\textbf{y}_n)\\
				\textbf{y}_{n+1}=J_{\textbf{A}_2}(\textbf{u}_n-\textbf{B}_2 \textbf{u}_n),
			\end{array}\right.}
	\end{eqnarray}
	where $\textbf{A}_1=\textbf{V}^{-1}(\phi_1 +\xi)$ ,\ $\textbf{B}_1=\textbf{V}^{-1}\psi_1$,\ $\textbf{A}_2=\textbf{V}^{-1}(\phi_2 +\xi)$ and $\textbf{B}_2=\textbf{V}^{-1}\psi_2.$ The Algorithm (\ref{e6.22}) is in the form of Algorithm (\ref{P4Al3.1}) for $\gamma=1$ and $A_i=\textbf{A}_i$ and $B_i=\textbf{B}_i, \ i=1,2$.
	Now, we define the real Hilbert space $\mathcal{K}_{\textbf{V}} \equiv \mathcal{H}\times \varOmega_1 \times \cdots \times \varOmega_m$ endowed with inner product $\langle \textbf{x},\textbf{y}\rangle_{\mathcal{K}_{\textbf{V}}}=\langle \textbf{x},{\textbf{V}}\textbf{y}\rangle_{\mathcal{K}}$ and corresponding norm is given by, $\|\textbf{x}\|_{\mathcal{K}_{\textbf{V}}}=\sqrt{\langle \textbf{x},{\textbf{V}}\textbf{x}\rangle_{\mathcal{K}}} \ \ \forall \textbf{x},\textbf{y} \in \mathcal{K}_{\textbf{V}}.$ \\
	In view of real Hilbert space $\mathcal{K}_{\textbf{V}}$ and Algorithm \ref{P4Al4.11}, we observe the following:
	\begin{enumerate}
		\item[(i)] since dom$(\xi)=\mathcal{K}$, $\phi_i +\xi$ are maximally monotone on $\mathcal{K}$ and thus maximal monotonocity of $\textbf{A}_i$ and $\textbf{B}_i$ on $\mathcal{K}_{\textbf{V}}$ are  followed, for $i=1,2$ \cite{BC2013}.
		\item[(ii)] $\textbf{B}_i$  are $\beta_i \rho$-cocoercive on $\mathcal{K}_{\textbf{V}}$ as $\psi_i$ are $\beta_i$-cocoercive in $\mathcal{K}$ \cite[Page 672]{BC2013}, for $i=1,2$.
		\item[(iii)] $\operatorname{Zer}(\textbf{A}_i+\textbf{B}_i$)=$\operatorname{Zer}({\textbf{V}^{-1}}(\phi_i+\xi+\psi_i))$=$\operatorname{Zer}(\phi_i+\xi+\psi_i),\ i=1,2$ and from condition $(\ref{e6.4})$, we can easily obtain that $\operatorname{Zer}(\textbf{A}_1+\textbf{B}_1)\cap \operatorname{Zer}(\textbf{A}_2+\textbf{B}_2) \neq \emptyset.$
	\end{enumerate}
	\par 	Since \textbf{V} is self-adjoint and $\rho$-strongly positive, weak convergence and strong convergence of sequences are same in both Hilbert spaces $\mathcal{K}$ and $\mathcal{K}_{\textbf{V}}$. Operators $\textbf{A}_i, \textbf{B}_i, \ i=1,2$ and sequences $\{\theta_{n}\},\{e_{n}\}$ satisfy the assumptions in Theorem $\ref{T4.1}$, therefore, according to Theorem \ref{T4.1}, $\{\textbf{y}_n\}$  converges strongly to $(\bar{y},\bar{v}_1,\dots,\bar{v}_m)\in$ $\operatorname{proj}_{\operatorname{Zer}(\textbf{A}_1+\textbf{B}_1)\cap \operatorname{Zer}(\textbf{A}_2+\textbf{B}_2)}(0,\dots,0)$ in the space ${\mathcal{K}}_\textbf{V}$ as $n \to \infty.$ Thus, we obtain the conclusion as $(\bar{y},\bar{v}_1,\dots,\bar{v}_m)\in \operatorname{Zer}(\phi_1+\xi+\psi_1)\cap \operatorname{Zer}(\phi_2+\xi+\psi_2),$  also satisfy primal-dual Problem \ref{P6.1}.
	
\end{proof}
Next, we define a complexly structured convex optimization problem and their Fenchel duals. Further, we propose an algorithm to solve the considered problem and study the convergence property of algorithm to find simultaneously the common solutions of optimization problems and common solutions of their Fenchel duals. Let $m$ is a positive integer. We have considered the following problem:
\begin{Problem}\label{P6.2}
	Let $f_1,f_2\in \varGamma(\mathcal{H})$ and $h_1,h_2$ be convex differentiable function with $\mu_1^{-1},\mu_2^{-1}$- Lipschitz continuous gradients, for some $\mu_1,\mu_2 >0.$ Let $\varOmega_i$ be real Hilbert spaces and $g_i, l_i,s_i,t_i \in \varGamma(\varOmega_i)$ such that $l_i,t_i$ are $\nu_i,\delta_i$-strongly convex, respectively, and $L_i:\mathcal{H} \to \varOmega_i$ be non-zero linear continuous operator $\forall i=1,2,\dots,m$. The optimization problem under consideration is to find $y \in \mathcal{H}$ such that
	\begin{eqnarray}
	y \in 	argmin_{x\in \mathcal{H}}\left\lbrace f_1(x)+\sum_{i=1}^{m}(g_i\square l_i)(L_i x)+h_1(x)\right\rbrace \nonumber\\ \bigcap argmin_{x\in \mathcal{H}}\left\lbrace f_2(x)+\sum_{i=1}^{m}(s_i\square t_i)(L_i x)+h_2(x)\right\rbrace
	\end{eqnarray}
	with its Fenchel-dual problem is to find $(v^*_1,\dots,v^*_m) \in \varOmega_1 \times\cdots \times \varOmega_m $ such that
	\begin{eqnarray}
	(v^*_1,\dots, v^*_m)\in 	argmax_{(v_1,\dots,v_m) \in \varOmega_1 \times \cdots \times \varOmega_m}\left\lbrace -(f_1^{*} \square h_1^{*})\left( -\sum_{i=1}^{m}L_i^* v_i\right) -\sum_{i=1}^{m}\left( g_i^*(v_i)+l_i^*(v_i)\right) \right\rbrace \nonumber\\
		\bigcap argmax_{(v_1,\dots,v_m) \in \varOmega_1 \times \cdots \times \varOmega_m}\left\lbrace -(f_2^{*} \square h_2^{*})\left( -\sum_{i=1}^{m}L_i^* v_i\right) -\sum_{i=1}^{m}\left( s_i^*(v_i)+t_i^*(v_i)\right) \right\rbrace.
	\end{eqnarray}
\end{Problem}
In following corollary, we propose an algorithm and study its convergence behaviour. The point of convergence will be the solution of Problem \ref{P6.2}.

\begin{Corollary}
	Assume  in Problem \ref{P6.2}
	\begin{equation}\label{e6.5}
		0\in ran \left( \partial f_1+\sum_{i=1}^{m}L_i ^* \circ (\partial g_i \square \partial l_i)\circ L_i +\nabla h_1 \right)  \bigcap ran \left( \partial f_2+\sum_{i=1}^{m}L_i ^* \circ (\partial s_i \square \partial t_i)\circ L_i +\nabla h_2\right).
	\end{equation}
	Consider $\tau, \sigma_i >0,  \ i=1,2,\dots,m$ such that
	\begin{equation*}
		2\rho\min\{\beta_1,\beta_2\} \geq 1.
	\end{equation*}
	where
	\begin{eqnarray*}
		\rho= \min\{\tau^{-1}, \sigma_1^{-1}, \dots, \sigma_m^{-1}\}\left(1-\sqrt{\tau\sum_{i=1}^{m}\sigma_i\|L_i\|^2}\right),\\
		\beta_1=\min\{\mu_1,\nu_1,\dots,\nu_m\}  \textrm{ and }   \beta_2= \min\{\mu_2,\delta_1,\dots,\delta_m\}.
	\end{eqnarray*}
	
	Consider the iterative scheme with intial point $(x_1,v_{1,1},\dots, v_{m,1}) \in \mathcal{H} \times \varOmega_1 \times \cdots \times\varOmega_m$ and defined by
	\begin{eqnarray}\label{e6.6}{
			\forall n\in \mathbb{N}	\left\{
			\begin{array}{lc@{}c@{}r}
				p_n = prox_{\tau f_1}\left[e_{n}x_n-\tau \left(e_{n}\sum_{i=1}^{m}L_i^* v_{i,n}+\nabla h_1(e_{n}x_n)\right)\right]\\
				u_n= e_{n}x_n+\theta_{n}(p_n-e_{n}x_n)\\
				For \ i=1,2,\dots, m\\
				
				q_{i,n}=prox_{\sigma_ig_i^{*}}\left[e_{n}v_{i,n}+\sigma_i(L_i(2p_n-e_{n}x_n)-\nabla l_i ^{*}(e_{n}v_{i,n}))\right]\\
				u_{i,n}=e_{n}v_{i,n}+\theta_{n}(q_{i,n}-e_{n}v_{i,n})\\
				x_{n+1}=prox_{\tau f_2}\left[u_n-\tau \left(\sum_{i=1}^{m}L_i^* u_{i,n}+\nabla h_2(u_n)\right)\right]\\
				v_{i,n+1}=prox_{\sigma_i s_i^{*}}\left[u_{i,n} +\sigma_i(L_i(2x_{n+1}-u_n)-\nabla t_i^{*}(u_{i,n}))\right],
			\end{array}\right.}
	\end{eqnarray}
	where sequences $\{\theta_{n}\}$ and $\{e_n\}$ satisfy the condition (\textbf{i}) in Theorem \ref{P4T3.1} and the conditions:
	$$0< \underline{\Theta} \leq \frac{2\beta_1 \rho}{4\beta_1 \rho-1} \theta_{n} \leq \overline{\Theta } <1 \textrm{ and } \sum_{n=1}^{\infty} |\theta_{n}-\theta_{n-1}| < \infty.$$
	
	Then, there exists $(\bar{x},\bar{v}_1,\dots,\bar{v}_m)\in  \mathcal{H}\times \varOmega_1 \times \cdots \times \varOmega_m $ such that sequence $\{(x_n,v_{1,n},\dots,v_{m,n})\}$ converges strongly to  $(\bar{x},\bar{v}_1,\dots,\bar{v}_m)$ as $n \to \infty$ and $(\bar{x},\bar{v}_1,\dots,\bar{v}_m)$ satisfies Problem \ref{P6.2}.
\end{Corollary}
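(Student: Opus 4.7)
The plan is to recognize this corollary as a direct specialization of Theorem~\ref{P4T6.1}, obtained by translating the convex-analytic data into the monotone-operator language used there. First I would fix the identifications $A=\partial f_1$, $B=\partial f_2$, $C=\nabla h_1$, $D=\nabla h_2$ and, for $i=1,\dots,m$, $P_i=\partial g_i$, $Q_i=\partial l_i$, $R_i=\partial s_i$, $S_i=\partial t_i$. Because all functions lie in $\Gamma(\mathcal{H})$ or $\Gamma(\varOmega_i)$, Rockafellar's theorem makes each of these subdifferentials maximally monotone. Strong convexity of $l_i$ (resp.\ $t_i$) with modulus $\nu_i$ (resp.\ $\delta_i$) delivers the required $\nu_i$-strong monotonicity of $Q_i$ (resp.\ $\delta_i$-strong monotonicity of $S_i$), and the Baillon--Haddad theorem converts the $\mu_j^{-1}$-Lipschitz continuity of $\nabla h_j$ into $\mu_j$-cocoercivity. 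Hence the operator hypotheses of Problem~\ref{P6.1} are fulfilled, and the parameters $\beta_1,\beta_2,\rho$ of that theorem coincide with those appearing here.

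Next I would match resolvents with proximal operators. The identity $J_{\tau\partial f}=\operatorname{prox}_{\tau f}$ handles the primal updates. For the dual updates, $(\partial g_i)^{-1}=\partial g_i^*$ gives $J_{\sigma_i P_i^{-1}}=\operatorname{prox}_{\sigma_i g_i^*}$, and similarly $J_{\sigma_i R_i^{-1}}=\operatorname{prox}_{\sigma_i s_i^*}$. Since $l_i$ is $\nu_i$-strongly convex, $l_i^*$ is convex and Fr\'echet differentiable with $\nu_i^{-1}$-Lipschitz gradient, so $Q_i^{-1}=\partial l_i^{*}=\nabla l_i^{*}$ is single-valued; analogously $S_i^{-1}=\nabla t_i^{*}$. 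After these substitutions the abstract Algorithm~\ref{P4Al4.11} becomes, line by line, the iteration~(\ref{e6.6}).

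Third, I would verify the range condition~(\ref{e6.4}) from the optimization-level hypothesis~(\ref{e6.5}). The strong convexity of $l_i$ and $t_i$ supplies the standard qualification that yields the infimal-convolution calculus rule $\partial(g_i\square l_i)=\partial g_i\,\square\,\partial l_i$ (and similarly for $s_i,t_i$). Combined with the sum rule for the subdifferential under a standard interior qualification, this translates~(\ref{e6.5}) termwise into~(\ref{e6.4}). At this point Theorem~\ref{P4T6.1} applies verbatim, giving strong convergence of $\{(x_n,v_{1,n},\dots,v_{m,n})\}$ to some $(\bar x,\bar v_1,\dots,\bar v_m)$ solving the primal-dual inclusion of Problem~\ref{P6.1}.

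The last step, and the one I expect to require the most care, is reading the limiting inclusion back as optimality for Problem~\ref{P6.2}: using Fermat's rule to identify $\bar x$ as a common minimizer of the two composite objectives, and the Fenchel--Young equivalence $v\in\partial g(L\bar x)\Longleftrightarrow L\bar x\in\partial g^{*}(v)$ to see $(\bar v_1,\dots,\bar v_m)$ as a common maximizer of the two Fenchel duals. The genuine technicality there is justifying the subdifferential parallel-sum rule and the primal-dual characterization under only the stated hypotheses; once these calculus identities are in hand, the rest is a mechanical transcription from monotone-operator language to convex analysis.
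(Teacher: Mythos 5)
Your proposal is correct and is exactly the intended derivation: the paper states this corollary without proof, leaving it as the standard specialization of Theorem \ref{P4T6.1} via $A=\partial f_1$, $B=\partial f_2$, $C=\nabla h_1$, $D=\nabla h_2$, $P_i=\partial g_i$, $Q_i=\partial l_i$, $R_i=\partial s_i$, $S_i=\partial t_i$, together with $J_{\tau\partial f}=prox_{\tau f}$, $J_{\sigma_i(\partial g_i)^{-1}}=prox_{\sigma_i g_i^*}$, $Q_i^{-1}=\nabla l_i^*$, and Baillon--Haddad, which is precisely what you do (and mirrors the paper's own explicit argument for the analogous Theorem \ref{c4.1}). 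Your additional care about the qualification conditions needed for $\partial(g_i\square l_i)=\partial g_i\,\square\,\partial l_i$ and for translating \eqref{e6.5} into \eqref{e6.4} is, if anything, more scrupulous than the paper, which simply asserts the parallel-sum subdifferential identity in its preliminaries.
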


\section{Tikhonov Regularized Douglas-Rachford-type Algorithms}\label{sec5}
In this section, using Algorithm \ref{P4A3.1} we propose a new Douglas-Rachford algorithm to solve monotone inclusion problem of sum of two maximally monotone operators. Further using proposed Douglas-Rachford algorithm, we propose a Douglas-Rachford-type primal-dual algorithm to solve complexly structured monotone inclusion problem containing linearly composite and parallel-sum  operators.

\subsection{Tikhonov Regularized Douglas-Rachford Algorithm}
Let $A, B:\mathcal{H} \to 2^\mathcal{H}$ be maximally monotone operators. In this section, we consider the following monotone inclusion problem:
\begin{equation}
	\text{ find } x\in \mathcal{H} \text{ such that } 0 \in (A+B)x .
\end{equation}
We propose a new  Douglas-Rachford algorithm based on Algorithm \ref{P4A3.1} such that generated sequence converges strongly to a point in the solution set.

\begin{Theorem}\label{T5.1}
	Consider $x_1 \in \mathcal{H}$ and $\gamma >0,$ then algorithm is given by:
	\begin{eqnarray}\label{e5.1}{
			\forall n\in \mathbb{N}\left\{
			\begin{array}{lc@{}c@{}r}
				y_n=J_{\gamma B}(e_{n}x_n)\\
				z_n=J_{\gamma A}(2y_n-e_{n}x_n)\\
				u_n=e_{n}x_n +\theta_{n}(z_n-y_n)\\
				x_{n+1}=(2J_{\gamma A}-Id)(2J_{\gamma B}-Id)u_n. \ \
			\end{array}\right.}
	\end{eqnarray}
	Let $\operatorname{Zer}(A +B)  \neq \emptyset$ and sequences $ e_n \in (0,1)$ and $\theta_{n} \in (0,2]$ satisfy the condition (\textbf{i}) in Theorem \ref{P4T3.1} and the conditions:
	
	$$0< \underline{\Theta} \leq \frac{\theta_{n}}{2} \leq \overline{\Theta } < 1 \textrm{ and } \sum_{n=1}^{\infty} |\theta_{n}-\theta_{n-1}| < \infty.$$
	
	Then the following statements are true:
	\begin{enumerate}
		\item[(i)] $\{x_n\}$ converges strongly to $\bar{x}=\operatorname{proj}_{\operatorname{Fix}R_{\gamma A} R_{\gamma B}}(0)$.
		\item[(ii)] \label{P4i2} $\{y_n\}$ and $\{z_n\}$ converges strongly to $J_{\gamma B}(\bar{x})\in \operatorname{Zer}(A+B)$.
	\end{enumerate}
\end{Theorem}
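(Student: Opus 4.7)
The plan is to reduce the algorithm (\ref{e5.1}) to an instance of Algorithm \ref{P4A3.1} with $\mathcal{S}=\mathcal{T}=R_{\gamma A}R_{\gamma B}$, and then invoke Theorem \ref{P4T3.1}. The key algebraic observation is that for any $w\in\mathcal{H}$,
\begin{equation*}
R_{\gamma A}R_{\gamma B}(w)=2\bigl(J_{\gamma A}(2J_{\gamma B}w-w)-J_{\gamma B}w\bigr)+w.
\end{equation*}
Applying this with $w=e_n x_n$ and using $y_n=J_{\gamma B}(e_n x_n)$ and $z_n=J_{\gamma A}(2y_n-e_n x_n)$, I get
\begin{equation*}
z_n-y_n=\tfrac{1}{2}\bigl(R_{\gamma A}R_{\gamma B}(e_n x_n)-e_n x_n\bigr).
\end{equation*}
Substituting this into the update for $u_n$ yields
\begin{equation*}
u_n=\Bigl(1-\tfrac{\theta_n}{2}\Bigr)e_n x_n+\tfrac{\theta_n}{2}\,R_{\gamma A}R_{\gamma B}(e_n x_n),
\end{equation*}
so that $x_{n+1}=R_{\gamma A}R_{\gamma B}(u_n)$ takes precisely the shape of \eqref{P4e3.1} with $\mathcal{S}=\mathcal{T}=R_{\gamma A}R_{\gamma B}$ and the parameter sequence $\theta_n$ replaced by $\theta_n/2$.

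Next I would verify the structural hypotheses. Since $J_{\gamma A}$ and $J_{\gamma B}$ are firmly nonexpansive, the reflected resolvents $R_{\gamma A},R_{\gamma B}$ are nonexpansive, and so is their composition. By Remark \ref{P3R2.1}, the hypothesis $\operatorname{Zer}(A+B)\neq\emptyset$ gives $\operatorname{Fix}(R_{\gamma A}R_{\gamma B})\neq\emptyset$, so $\Omega=\operatorname{Fix}(R_{\gamma A}R_{\gamma B})\cap\operatorname{Fix}(R_{\gamma A}R_{\gamma B})$ is nonempty. The parameter assumptions in the theorem, namely $0<\underline{\Theta}\le \theta_n/2\le\overline{\Theta}<1$ and $\sum|\theta_n-\theta_{n-1}|<\infty$, are exactly the requirements imposed on the parameter $\theta_n$ in Theorem \ref{P4T3.1} (and condition \textbf{(i)} on $e_n$ is assumed directly). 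Theorem \ref{P4T3.1} then delivers
\begin{equation*}
x_n\to\bar{x}:=\operatorname{proj}_{\operatorname{Fix} R_{\gamma A}R_{\gamma B}}(0),
\end{equation*}
proving part (i).

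For part (ii), I would exploit continuity of the resolvents. Since $e_n\to 1$ and $x_n\to\bar{x}$, we have $e_n x_n\to\bar{x}$. Nonexpansiveness of $J_{\gamma B}$ then gives
\begin{equation*}
y_n=J_{\gamma B}(e_n x_n)\longrightarrow J_{\gamma B}(\bar{x}).
\end{equation*}
Likewise $2y_n-e_n x_n\to 2J_{\gamma B}(\bar{x})-\bar{x}=R_{\gamma B}(\bar{x})$, so $z_n\to J_{\gamma A}(R_{\gamma B}(\bar{x}))$. Using the fixed-point relation $\bar{x}=R_{\gamma A}R_{\gamma B}(\bar{x})$, a short calculation gives
\begin{equation*}
J_{\gamma A}(R_{\gamma B}(\bar{x}))=\tfrac{1}{2}\bigl(\bar{x}+R_{\gamma B}(\bar{x})\bigr)=J_{\gamma B}(\bar{x}),
\end{equation*}
so $z_n$ and $y_n$ share the same limit, and Remark \ref{P3R2.1} identifies this limit as an element of $\operatorname{Zer}(A+B)$.

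The main obstacle is really just the identification step, i.e. recognising that the two-step update collapses into the form of Algorithm \ref{P4A3.1} once one notices that $z_n-y_n$ equals half the displacement of the Douglas--Rachford operator at $e_n x_n$; everything afterwards is a direct citation of Theorem \ref{P4T3.1} together with continuity arguments.
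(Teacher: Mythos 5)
Your proposal is correct and follows essentially the same route as the paper: both rewrite the scheme as $x_{n+1}=T\bigl[(1-\tfrac{\theta_n}{2})e_nx_n+\tfrac{\theta_n}{2}T(e_nx_n)\bigr]$ with $T=R_{\gamma A}R_{\gamma B}$ via the identity $z_n-y_n=\tfrac12(T(e_nx_n)-e_nx_n)$, invoke Theorem \ref{P4T3.1} with the halved parameter, and obtain (ii) by continuity of the resolvents together with $\operatorname{Zer}(A+B)=J_{\gamma B}(\operatorname{Fix}T)$. Your explicit verification that $J_{\gamma A}(R_{\gamma B}\bar{x})=J_{\gamma B}(\bar{x})$ is a slightly more detailed justification of the limit of $\{z_n\}$ than the paper's remark that $z_n-y_n\to 0$, but the argument is the same.
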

\begin{proof}
	Consider the operator $T\equiv R_{\gamma A} R_{\gamma B}:\mathcal{H} \to 2^\mathcal{H}$. From the definition of reflected resolvent, and definition of operator $T$, algorithm (\ref{e5.1}) can be rewritten as
	\begin{eqnarray}
		x_{n+1}&=&R_{\gamma A}R_{\gamma B}\{e_{n}x_n+\frac{\theta_{n}}{2}(R_{\gamma A}R_{\gamma B})(e_{n}x_n)-e_{n}x_n\}\nonumber\\
		&=& T\{e_{n}x_n +\frac{\theta_{n}}{2}(T(e_{n}x_n)-e_{n}x_n)\}.
	\end{eqnarray}
	Since resolvent operator is nonexpansive \cite[Corollary 23.10(ii)]{Bauschke2011}, $T$ is nonexpansive. Suppose $x^* \in \operatorname{Zer}(A+B)$ and results from \cite[Proposition 25.1(ii)]{Bauschke2011}, we have $\operatorname{Zer}(A+B)=J_{\gamma B}(\operatorname{Fix}(T)),$ which collectively implies that $\operatorname{Fix} {(T )} \neq \emptyset.$ Applying Theorem \ref{P4T3.1} with $A_1=A_2=A,B_1=B_2=B$, we conclued that $\{x_n\}$ converges strongly to $\bar{x}=\operatorname{proj}_{\operatorname{Fix} (T)}(0)$ as $n \to \infty.$ \\
	The continuity of resolvent operator forces the sequence $\{y_n\} $ to converge strongly to $J_{\gamma B}\bar{x} \in \operatorname{Zer}(A+B).$ Finally, since $z_n-y_n=\frac{1}{2}(T(e_{n}x_n)-e_{n}x_n),$ which converges strongly to 0, concludes \textbf{(ii)}. \\
\end{proof}

\begin{Problem}\label{P4P4.11}
	Let $f,g:\mathcal{H} \to \mathbb{R} \cup \{\infty\}$ are convex proper and lower semicontinuous functions. Consider the minimization problem
	\begin{equation}\label{P4e4.11}
		\min_{x\in \mathcal{H}} f(x) +g(x).
	\end{equation}
\end{Problem}
Using Karush-Kuhn-Tucker condition, (\ref{P4e4.11}) is equivalent to solve the inclusion problem
\begin{equation}
	\text{find} {\ x \in \mathcal{H}} \  0 \in \partial f(x) +\partial g(x).
\end{equation}

In order to solve such type of problem, we propose an iterative scheme and study its convergence behavior which can be summarized in following corollary.
\begin{Corollary}
	Let $f,g$ be as in Problem \ref{P4P4.11} with $argmin_{x \in \mathcal{H}}\{f (x)+g (x)\} \neq \emptyset$ and $0 \in sqri (dom \ f-dom \ g).$ Consider the following iterative scheme with $x_1 \in \mathcal{H}$:
	\begin{eqnarray}\label{e5.11}{
			\forall n\in \mathbb{N}\left\{
			\begin{array}{lc@{}c@{}r}
				y_n=prox_{\gamma g}(e_{n}x_n)\\
				z_n=prox_{\gamma f}(2y_n-e_{n}x_n)\\
				u_n=e_{n}x_n +\theta_{n}(z_n-y_n)\\
				x_{n+1}=(2prox_{\gamma f}-Id)(2prox_{\gamma g}-Id)u_n, \ \
				
			\end{array}\right.}
	\end{eqnarray}
	where $\gamma >0$
	and sequences $\{\theta_{n}\} \text{ and } \{e_{n}\}$ satisfy the condition (\textbf{i}) in Theorem \ref{P4T3.1} and the conditions:
	
	$$0< \underline{\Theta} \leq \frac{\theta_{n}}{2} \leq \overline{\Theta } < 1 \textrm{ and } \sum_{n=1}^{\infty} |\theta_{n}-\theta_{n-1}| < \infty.$$
	Then we have the following:
	\begin{enumerate}
		\item[(i)] $\{x_n\}$ converges strongly to $\bar{x}= \operatorname{proj}_{\operatorname{Fix}(T)}(0)$ where $T=(2prox_{\gamma f}-Id)(2prox_{\gamma g}-Id)$.
		\item[(ii)] $\{y_n\}$ and $\{z_n\}$ converge strongly to $prox_{\gamma g}(\bar{x})\in argmin_{x \in \mathcal{H}}\{f (x)+g (x)\} $.
	\end{enumerate}
	\begin{proof}
		Since $argmin_{x \in \mathcal{H}}\{f (x)+g (x)\} \neq \emptyset$ and $0 \in sqri (dom \ f-dom \ g)$ (\cite[Proposition 7.2]{Bauschke2011}) ensures that $\operatorname{Zer}(A+B)=argmin_{x \in \mathcal{H}}\{f (x)+g (x)\}.$ The results can be obtained by choosing $A =\partial f, B= \partial g $ in Theorem \ref{T5.1}.
	\end{proof}
\end{Corollary}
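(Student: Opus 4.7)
The plan is to deduce this corollary directly from Theorem \ref{T5.1} by specializing to $A=\partial f$ and $B=\partial g$. Since $f,g\in\Gamma(\mathcal{H})$, the preliminaries give that $\partial f$ and $\partial g$ are maximally monotone operators on $\mathcal{H}$, and from the definition of $\text{prox}$ recalled in Section 2 we have $J_{\gamma\partial f}=\text{prox}_{\gamma f}$ and $J_{\gamma\partial g}=\text{prox}_{\gamma g}$. Substituting these identifications into the Douglas-Rachford recursion (\ref{e5.1}) produces exactly the scheme (\ref{e5.11}); moreover the conditions imposed on $\{e_n\}$ and $\{\theta_n\}$ coincide with those of Theorem \ref{T5.1}.

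The only non-trivial task is therefore to verify the hypothesis $\operatorname{Zer}(\partial f+\partial g)\neq\emptyset$ required by Theorem \ref{T5.1}. By Fermat's rule, the assumption $\operatorname{argmin}_{x\in\mathcal{H}}\{f(x)+g(x)\}\neq\emptyset$ is equivalent to $\operatorname{Zer}(\partial(f+g))\neq\emptyset$, so it suffices to establish the subdifferential sum rule
\[
\partial(f+g)=\partial f+\partial g.
\]
This is precisely where the constraint qualification $0\in\operatorname{sqri}(\operatorname{dom} f-\operatorname{dom} g)$ enters: invoking the Attouch-Br\'ezis type sum theorem under this qualification (as stated in Bauschke-Combettes for the sqri condition) yields the required equality, so that $\operatorname{Zer}(\partial f+\partial g)=\operatorname{argmin}\{f+g\}\neq\emptyset$. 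This is the main obstacle in the argument, and it is entirely a convex-analytic fact rather than an iterative one.

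With this verified, statement (i) follows from Theorem \ref{T5.1}(i): $\{x_n\}$ converges strongly to $\bar x=\operatorname{proj}_{\operatorname{Fix}(T)}(0)$, where $T=R_{\gamma\partial f}R_{\gamma\partial g}=(2\,\text{prox}_{\gamma f}-Id)(2\,\text{prox}_{\gamma g}-Id)$. Statement (ii) follows from Theorem \ref{T5.1}(ii) together with the continuity of the resolvent: the sequences $\{y_n\}$ and $\{z_n\}$ converge strongly to $J_{\gamma\partial g}(\bar x)=\text{prox}_{\gamma g}(\bar x)$, and this limit lies in $\operatorname{Zer}(\partial f+\partial g)=\operatorname{argmin}_{x\in\mathcal{H}}\{f(x)+g(x)\}$ by the sum rule derived above. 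No extra convergence analysis is needed, since the heavy work has already been carried out in Theorems \ref{P4T3.1} and \ref{T5.1}.
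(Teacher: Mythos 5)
Your proposal is correct and follows essentially the same route as the paper: both specialize Theorem \ref{T5.1} to $A=\partial f$, $B=\partial g$ (using $J_{\gamma\partial f}=prox_{\gamma f}$) and both use the constraint qualification $0\in sqri(\operatorname{dom} f-\operatorname{dom} g)$ to get the subdifferential sum rule, hence $\operatorname{Zer}(\partial f+\partial g)=argmin\{f+g\}\neq\emptyset$. Your write-up merely makes explicit (via Fermat's rule and the Attouch--Br\'ezis-type sum theorem) what the paper compresses into a one-line citation.
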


\subsection{Tikhonov Regularized Douglas-Rachford-type Primal-Dual Algorithm}
In this section, we propose Douglas-Rachford-type primal-dual algorithms to solve the complex-structured monotone inclusion problem having mixtures of composition of linear operators and parallel-sum operators. We consider the monotone inclusion problem is as follows:
\begin{Problem}\label{P4P6.2}
	Let $A:\mathcal{H} \to 2^\mathcal{H}$ be a maximally monotone operator. Consider for each $i=1, \dots,m$, $\varOmega_i$ is a real Hilbert space, $P_i, Q_i: \varOmega_i \to 2^{\varOmega_i}$ are maximally monotone operators and $L_i:\mathcal{H} \to \varOmega_i$ are nonzero linear continuous operator.
	The primal inclusion problem is to find $\bar{x} \in \mathcal{H}$ satisfying
	\begin{eqnarray}
		0\in A \bar{x} +\sum_{i=1}^{m} L_i^{*}(P_i \square Q_i)(L_i \bar{x})\nonumber
	\end{eqnarray}
	together with dual inclusion problem
	\begin{eqnarray}\label{e6.1}{find \ \ \bar{v}_1\in \varOmega_1, \dots, \bar{v}_m \in \varOmega_m \text{ such that } (\exists x \in \mathcal{H})
			\left\{
			\begin{array}{lc@{}c@{}r}
				-\sum_{i=1}^{m}L_i^* \bar{v}_i \in Ax\\
				\bar{v}_i \in (P_i \square Q_i)(L_i x), \ i=1,\dots,m.
			\end{array}\right.}
	\end{eqnarray}
\end{Problem}

Here, operators $P_i ,Q_i, i=1,\dots,m $ are not cocoercive, thus to solve the Problem \ref{P4P6.2}, we have to evaluate the resolvent of each operator, which makes the Douglas-Rachford algorithm based primal-dual algorithm is more appropriate to solve the problem.

\begin{Theorem}\label{P4T5.11}
	In addition to assumption in Problem \ref{P4P6.2}, we assume that
	
	\begin{equation}\label{P4P6.10}
		0 \in ran \left( A+\sum_{i=1}^{m}L_i ^* \circ(P_i \square Q_i) \circ L_i \right).
	\end{equation}
	Consider the strictly positive integers $\tau, \sigma_i, i=1,\dots,m$ satisfying
	\begin{equation}
		\tau \sum_{i=1}^{m} \sigma_i\|L_i\|^2 < 4.
	\end{equation}
	
	Consider the initial point $(x_1,v_{1,1},\dots,v_{m,1})\in \mathcal{H}\times \varOmega_i \times \cdots \times \varOmega_m.$ The primal- dual algorithm to solve Problem \ref{P4P6.2} is given by

	\begin{algorithm}[H]
		\SetAlgoLined
		
		\KwIn{\begin{enumerate}
				\item initial points $(x_1,v_{1,1},\dots,v_{m,1})\in \mathcal{H}\times \varOmega_i \times \cdots \times \varOmega_m.$
				\item strictly positive real numbers $\tau, \sigma_i$, $i=1,2,...,m$ be such that $\tau\sum_{i=1}^{m} \sigma_i \|L_i\|^2 < 4 $.
				\item sequences $e_{n} \in (0,1),\theta_n \in (0,2]$
			\end{enumerate}	
		}
		\SetKwBlock{For}{For}{}
		\SetKwProg{}{}{}{}
		\For($n\in \mathbb{N}$; ){
			$p_{1,n}=J_{\tau A} (e_n x_n-\frac{\tau}{2}e_n \sum_{i=1}^{m}L_i^{*}v_{i,n} ) $\\
			$w_{1,n}=2p_{1,n}-e_n x_n$\\
			
			\For($i=1,\dots, m$; ){
				
				$	p_{2,i,n}=J_{\sigma_i P_i ^{-1}}(e_n v_{i,n} +\frac{\sigma_i}{2}L_i w_{1,n})$\\
				$	w_{2,i,n}=2p_{2,i,n}-e_n v_{i,n} $  \\
			}
			
			$	z_{1,n}=w_{1,n}-\frac{\tau}{2}\sum_{i=1}^{m}L_i^{*}w_{2,i,n}$\\
			$	u_{1,n}=e_n x_n+\theta_{n}(z_{1,n}-p_{1,n})$\\
			
			\For($i=1,\dots, m$; ){
				$z_{2,i,n}=J_{\sigma_i Q_i ^{-1}}(w_{2,i,n} + \frac{\sigma_i}{2}L_i (2z_{1,n}-w_{1,n}))$\\
				$u_{2,i,n}=e_n v_{i,n} +\theta_{n}(z_{2,i,n}-p_{2,i,n})$\\
				
			}
			
			$	q_{1,n}=J_{\tau A}(u_{1,n}-\frac{\tau}{2}\sum_{i=1}^{m}L_i^{*}(u_{2,i,n}))$\\
			$	s_{1,n}=2q_{1,n}-u_{1,n}$\\
			
			\For($i=1,\dots, m$; ){
				$	q_{2,i,n}=J_{\sigma_i P_i ^{-1}}(u_{2,i,n}+\frac{\sigma_i}{2}L_i s_{1,n})$\\
				$	s_{2,i,n}=2q_{2,i,n}-u_{2,i,n}$\\
			}
			
			$d_{1,n}=s_{1,n}-\frac{\tau}{2}\sum_{i=1}^{m}L_i^{*}(s_{2,i,n})$\\
			$x_{n+1}=2d_{1,n}-s_{1,n}$\\
			\For($i=1,\dots, m$; ){
				$d_{2,i,n}=J_{\sigma_i Q_i ^{-1}}(s_{2,i,n}+\frac{\sigma_i}{2}L_i(x_{n+1}))$\\
				$v_{2,i,n}=2d_{2,i,n}-s_{2,i,n}$
				
			}
			
		}

		\KwOut{$(x_{n+1},v_{1,n+1},\dots,v_{m,n+1})$}		
		\caption{To optimize the complexly structured monotone inclusion Problem \ref{P4P6.2} }
		\label{P3Al4.1}
	\end{algorithm}	
	
	where sequences $\{\theta_{n}\}$ and $\{e_{n}\}$ satisfy the condition (\textbf{i}) in Theorem \ref{P4T3.1} and the conditions:
	
	$$0< \underline{\Theta} \leq \frac{\theta_{n}}{2} \leq \overline{\Theta } < 1 \textrm{ and } \sum_{n=1}^{\infty} |\theta_{n}-\theta_{n-1}| < \infty.$$
	
	Then there exist an element $(\bar{x}, \bar{v}_1,\dots,\bar{v}_m)\in \mathcal{H}\times \varOmega_1 \times \cdots \times \varOmega_m$ such that following statements are true:
	\begin{enumerate}
		\item[(a)] Denote\\
		$\bar{p}_1=J_{\tau A}\left( \bar{x}-\frac{\tau}{2}\sum_{i=1}^{m}L_i ^* \bar{v}_i\right) $\\
		$\bar{p}_{2,i}=J_{\sigma_i P_i ^{-1}}\left( \bar{v}_i +\frac{\sigma_i}{2}L_i(2\bar{p}_1-\bar{x})\right) , i=1,\dots, m.$
		Then the point $(\bar{p}_1, \bar{p}_{2,1},\dots, \bar{p}_{2,m})\in \mathcal{H} \times \varOmega_1 \times\cdots \times \varOmega_m $ is a primal-dual solution of Problem \ref{P4P6.2}.
		\item[(b)] $\{(x_n,v_{1,n}, \dots, v_{m,n})\}$ converges strongly to $(\bar{x},\bar{v}_1,\dots, \bar{v}_m)$.
		\item[(c)] $\{(p_{1,n}, p_{2,1,n},\dots, p_{2,m,n})\}$ and $\{(z_{1,n}, z_{2,1,n},\dots, z_{2,m,n})\}$ converge strongly to $(\bar{p}_1, \bar{p}_{2,1},\dots, \bar{p}_{2,m})$.
	\end{enumerate}
\end{Theorem}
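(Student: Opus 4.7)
The plan is to mimic the product-space plus preconditioner strategy that worked for the forward-backward primal-dual case in Theorem \ref{P4T6.1}, but built on top of the Douglas-Rachford result in Theorem \ref{T5.1} instead of the forward-backward one.

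First I would pass to the product Hilbert space $\mathcal{K}=\mathcal{H}\times\varOmega_1\times\cdots\times\varOmega_m$ with its canonical inner product and introduce the operators
\begin{equation*}
\phi(x,v_1,\dots,v_m)=(Ax,P_1^{-1}v_1,\dots,P_m^{-1}v_m),\quad \varphi(x,v_1,\dots,v_m)=(0,Q_1^{-1}v_1,\dots,Q_m^{-1}v_m),
\end{equation*}
\begin{equation*}
\xi(x,v_1,\dots,v_m)=\Bigl(\textstyle\sum_{i=1}^{m}L_i^{*}v_i,\,-L_1 x,\dots,-L_m x\Bigr).
\end{equation*}
Exactly as in the proof of Theorem \ref{P4T6.1}, $\phi$ and $\varphi$ are maximally monotone (diagonal products of maximally monotone operators), and $\xi$ is bounded, linear, skew-adjoint with $\operatorname{dom}\xi=\mathcal{K}$, so both $\phi+\tfrac{1}{2}\xi$ and $\varphi+\tfrac{1}{2}\xi$ are maximally monotone. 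The Kuhn--Tucker characterization of Problem \ref{P4P6.2} identifies the primal-dual solutions with $\operatorname{Zer}(\phi+\varphi+\xi)$, and the range hypothesis (\ref{P4P6.10}) guarantees that this set is nonempty.

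Second, I would introduce a self-adjoint bounded linear preconditioner $\mathbf{V}:\mathcal{K}\to\mathcal{K}$ whose diagonal blocks are $\tau^{-1}Id$ and $\sigma_i^{-1}Id$ and whose off-diagonal blocks involve $\tfrac{1}{2}L_i$, chosen precisely so that the resolvents in the $\mathbf{V}$-metric $\langle\cdot,\mathbf{V}\cdot\rangle$ of the two operators
\begin{equation*}
\mathbf{A}:=\mathbf{V}^{-1}\bigl(\varphi+\tfrac{1}{2}\xi\bigr),\qquad \mathbf{B}:=\mathbf{V}^{-1}\bigl(\phi+\tfrac{1}{2}\xi\bigr)
\end{equation*}
decouple into the elementary resolvents of $A$, $P_i^{-1}$, and $Q_i^{-1}$ that are actually evaluated in Algorithm \ref{P3Al4.1}. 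A Cauchy--Schwarz / Young argument on the off-diagonal coupling shows that the step-size condition $\tau\sum_{i=1}^{m}\sigma_i\|L_i\|^{2}<4$ makes $\mathbf{V}$ $\rho$-strongly positive for some $\rho>0$, so the induced norm $\|\cdot\|_{\mathbf{V}}$ is equivalent to $\|\cdot\|_{\mathcal{K}}$ and weak/strong convergence are identical in $\mathcal{K}$ and in $\mathcal{K}_{\mathbf{V}}$.

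Third, I would line up Algorithm \ref{P3Al4.1} with the iteration of Theorem \ref{T5.1}. Setting $\mathbf{x}_n:=(x_n,v_{1,n},\dots,v_{m,n})$, $\mathbf{p}_n:=(p_{1,n},p_{2,1,n},\dots,p_{2,m,n})$, and similarly $\mathbf{z}_n,\mathbf{u}_n,\mathbf{q}_n,\mathbf{d}_n$, I would verify block-by-block that Phase 1 of the pseudocode computes $\mathbf{p}_n=J_{\mathbf{B}}(e_n\mathbf{x}_n)$, Phase 2 computes $\mathbf{z}_n=J_{\mathbf{A}}(2\mathbf{p}_n-e_n\mathbf{x}_n)$ and then $\mathbf{u}_n=e_n\mathbf{x}_n+\theta_n(\mathbf{z}_n-\mathbf{p}_n)$, Phase 3 computes $\mathbf{q}_n=J_{\mathbf{B}}(\mathbf{u}_n)$, and Phase 4 computes $\mathbf{d}_n=J_{\mathbf{A}}(2\mathbf{q}_n-\mathbf{u}_n)$ before producing $\mathbf{x}_{n+1}=R_{\mathbf{A}}R_{\mathbf{B}}\mathbf{u}_n=2\mathbf{d}_n-2\mathbf{q}_n+\mathbf{u}_n$. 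The $\tfrac{1}{2}$ factors decorating every occurrence of $L_i$ and $L_i^{*}$ in the pseudocode are exactly those contributed by splitting $\xi$ equally between $\mathbf{A}$ and $\mathbf{B}$. Since $\mathbf{A}+\mathbf{B}=\mathbf{V}^{-1}(\phi+\varphi+\xi)$, the assumption $\operatorname{Zer}(\mathbf{A}+\mathbf{B})\neq\emptyset$ of Theorem \ref{T5.1} follows from the first paragraph, and the bounds on $\{e_n\}$, $\{\theta_n\}$ in the present statement coincide with those required there. Invoking Theorem \ref{T5.1} then gives strong convergence of $\{\mathbf{x}_n\}$ in $\mathcal{K}_{\mathbf{V}}$, hence in $\mathcal{K}$, to some $\bar{\mathbf{x}}=(\bar{x},\bar{v}_1,\dots,\bar{v}_m)$, which is (b); it also gives strong convergence of $\{\mathbf{p}_n\}$ and $\{\mathbf{z}_n\}$ to $J_{\mathbf{B}}(\bar{\mathbf{x}})\in\operatorname{Zer}(\mathbf{A}+\mathbf{B})$, which is (c). Unpacking the components of $J_{\mathbf{B}}(\bar{\mathbf{x}})$ reproduces the formulas for $\bar{p}_1$ and $\bar{p}_{2,i}$ in the statement, and the inclusion $0\in(\phi+\varphi+\xi)(\bar{p}_1,\bar{p}_{2,1},\dots,\bar{p}_{2,m})$ unfolds coordinatewise into the Kuhn--Tucker relations that certify $(\bar{p}_1,\bar{p}_{2,1},\dots,\bar{p}_{2,m})$ as a primal-dual solution, giving (a).

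The step I expect to be the main obstacle is the explicit verification that the pseudocode really implements $J_{\mathbf{A}}$ and $J_{\mathbf{B}}$ in the $\mathbf{V}$-metric. Concretely, one must expand the inclusion $\mathbf{V}(\mathbf{z}-\mathbf{y})+(\varphi+\tfrac{1}{2}\xi)\mathbf{z}\ni 0$ for a given input $\mathbf{y}$, solve it block-by-block for the primal and dual components, and check that the sequential primal-then-dual pattern of each phase of Algorithm \ref{P3Al4.1} is exactly what the off-diagonal part of $\mathbf{V}$ dictates; the algebra is routine but bookkeeping-heavy. Everything downstream of this identification is a direct application of Theorem \ref{T5.1} combined with the norm equivalence between $\mathcal{K}$ and $\mathcal{K}_{\mathbf{V}}$.
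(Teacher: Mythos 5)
Your proposal follows essentially the same route as the paper's own proof: pass to the product space $\mathcal{K}$, introduce $\phi$, $\psi$ (your $\varphi$), and the skew operator $\xi$, precondition by the self-adjoint strongly positive operator (the paper's $\textbf{W}$, your $\mathbf{V}$), identify the pseudocode block-by-block with $J_{\mathbf{B}}$ and $J_{\mathbf{A}}$ in the preconditioned metric, and invoke Theorem \ref{T5.1}. The argument is correct and matches the paper's proof in structure and in every essential step.
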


\begin{proof}
	Consider the real Hilbert space $\mathcal{K}$ and operators
	\begin{enumerate}
		\item[(i)] $\phi: \mathcal{K} \to 2^\mathcal{K}$, defined by $(x,u_1,\dots,u_m)\mapsto (Ax,P_1^{-1}u_1,\dots,P_m^{-1}u_m),$
		
		\item[(ii)] $\xi:\mathcal{K} \to \mathcal{K}$, defined by $(x,u_1,\dots,u_m)\mapsto \left(\sum_{i=1}^{m}L_i ^*u_i,-L_1 x,\dots, -L_m x\right),$
		\item[(iii)] $\psi: \mathcal{K} \to 2^\mathcal{K}$, defined by
		$\psi (x,u_1,\dots,u_m)= \left(0,Q_1^{-1}u_1,\dots, Q_m^{-1}u_m\right).$
	\end{enumerate}
	
	We can observe the following:
	\begin{enumerate}
		\item[(i)] operator $\frac{1}{2} \xi + \psi \text{ and } \frac{1}{2}\xi+\phi $ are maximally monotone as dom $\xi=\mathcal{K}$ (\cite[Corollary 24.4(i)]{Bauschke2011}),
		\item[(ii)]  condition (\ref{P4P6.10}) implies $\operatorname{Zer}(\phi+\xi+\psi) \neq \emptyset$,
		\item[(iii)] \label{P4R6.2}every point in $\operatorname{Zer}(\phi+\xi+\psi)$ solves Problem \ref{P4P6.2}.
	\end{enumerate}
	Define the linear continuous operator $\textbf{W}:\mathcal{K}\to \mathcal{K}$, defined by
	$$\textbf{W}(x,u_1,\dots,u_m )= \left( \frac{x}{\tau}-\frac{1}{2}\sum_{i=1}^{m}L_i ^*u_i,\frac{u_1}{\sigma_1}-\frac{1}{2}L_1x, \dots, \frac{u_m}{\sigma_m}-\frac{1}{2}L_m x \right) $$  which is self-adjoint. Consider $$\rho=\left( 1-\frac{1}{2}\sqrt{\tau \sum_{i=1}^{m}\sigma_i\|L_i\|^2} \right) \min\left\lbrace \frac{1}{\tau}, \frac{1}{\sigma_1},\dots, \frac{1}{\sigma_m} \right\rbrace >0.$$ The operator $\textbf{W}$ is $\rho$- strongly positive in $\mathcal{K}_\textbf{W}$ (\cite{BC2013}) and satisfies the following inequality
	$$\langle x, \textbf{W}x \rangle _\mathcal{K} \geq \rho \|x\|_\mathcal{K} ^2  \ \ \forall x \in \mathcal{K}.$$ Thus the inverse of $\textbf{W}$ exists and satisfies $\|\textbf{W}^{-1}\| \leq \frac{1}{\rho}.$ Consider the sequences
	\begin{eqnarray}{
			\forall n\in \mathbb{N}	\left\{
			\begin{array}{lc@{}c@{}r}
				\text{\textbf{x}}_n = (x_n, v_{1,n}, \dots,v_{m,n})\\
				\text{\textbf{y}}_n = (p_{1,n},p_{2,1,n},\dots,p_{2,m,n})  \\
				\text{\textbf{z}}_n = (z_{1,n},z_{2,1,n},\dots,z_{2,m,n})\\
				\text{\textbf{u}}_n = (u_{1,n},u_{2,1,n},\dots,u_{2,m,n})\\
				\text{\textbf{q}}_n = (q_{1,n},q_{2,1,n},\dots,q_{2,m,n}) \\
				\text{\textbf{s}}_n = (s_{1,n},s_{2,1,n},\dots,s_{2,m,n}) \\
				\text{\textbf{d}}_n = (d_{1,n},d_{2,1,n},\dots,d_{2,m,n}).
			\end{array}\right.}
	\end{eqnarray}
	Using the definition of operators $\phi, \xi,\psi$ and $\textbf{W}$, Algorithm \ref{P3Al4.1} can be written equivalently as
	
	\begin{eqnarray}{
			\forall n\in \mathbb{N}	\left\{
			\begin{array}{lc@{}c@{}r}
				\textbf{W}(e_n \textbf{x}_n-\textbf{y}_n ) \in (\frac{1}{2}\xi+\phi)\textbf{y}_n\\
				\textbf{W}(2\textbf{y}_n- e_n\textbf{x}_n-\textbf{z}_n ) \in (\frac{1}{2}\xi+\psi)\textbf{z}_n\\
				\textbf{u}_n= e_n \textbf{x}_n+\theta_{n}(\textbf{z}_n -\textbf{y}_n)\\
				\textbf{W}(\textbf{u}_n -\textbf{q}_n)\in (\frac{1}{2}\xi+\phi)\textbf{q}_n\\
				\textbf{W}(\textbf{s}_n -\textbf{d}_n)\in (\frac{1}{2}\xi+\psi)(\textbf{d}_n)\\
				\textbf{x}_{n+1}=2\textbf{d}_n-\textbf{s}_n,
			\end{array}\right.}
	\end{eqnarray}
	which is further equivalent to
	\begin{eqnarray}{
			\forall n\in \mathbb{N}	\left\{
			\begin{array}{lc@{}c@{}r}
				\textbf{y}_n =(Id+\textbf{W}^{-1}(\frac{1}{2}\xi+\phi))^{-1}(e_n\textbf{x}_n)  \\
				{\textbf{z}_n=(Id+\textbf{W}^{-1}(\frac{1}{2}\xi+\psi))^{-1}(2\textbf{y}_n-e_n\textbf{x}_n) }\\
				{\textbf{u}_n= e_n\textbf{x}_n+\theta_{n}(\textbf{z}_n -\textbf{y}_n)}\\
				\textbf{q}_n= (Id+\textbf{W}^{-1}(\frac{1}{2}\xi+\phi))^{-1}(\textbf{u}_n)\\
				\textbf{s}_n = 2\textbf{q}_n - \textbf{u}_n\\
				\textbf{d}_n=(Id+\textbf{W}^{-1}(\frac{1}{2}\xi+\psi))^{-1}(\textbf{s}_n)\\
				\textbf{x}_{n+1}=2\textbf{d}_n-\textbf{s}_n.
			\end{array}\right.}
	\end{eqnarray}
	Now, consider the real Hilbert space $\mathcal{K}_{\text{\textbf{W}}}=\mathcal{H}\times \varOmega_1\times \cdots \times \varOmega_m$ with inner product and norm defined as \\
	$\langle \textbf{x}, \textbf{y} \rangle_{\mathcal{K}_\textbf{W}} = \langle \textbf{x}, \textbf{W}\textbf{y} \rangle $ and $\|\textbf{x}\|_{\mathcal{K}_\textbf{W}}= \sqrt{\langle \textbf{x}, \textbf{W}\textbf{x} \rangle_{\mathcal{K}}}$ respectively.\\
	Now, define the operators $\textbf{A} \equiv \textbf{W}^{-1}(\frac{1}{2}\xi+\psi)$ and $\textbf{B} \equiv \textbf{W}^{-1}(\frac{1}{2}\xi+\phi),$ which are maximally monotone on $\mathcal{K}_{\textbf{W}}$ as $\frac{1}{2}\xi+\phi$ and $\frac{1}{2}\xi+\psi$ are maximally monotone on $\mathcal{K}.$ The Algorithm \ref{P3Al4.1} can be written in the form of Douglas-Rachford algorithm as
	\begin{eqnarray}{
			\forall n\in \mathbb{N}\left\{
			\begin{array}{lc@{}c@{}r}
				\textbf{y}_n = \textbf{J}_{\textbf{B}}(e_n \textbf{x}_n)\\
				\textbf{u}_n = e_n\textbf{x}_n+\theta_{n}\left( \textbf{J}_{\textbf{A}}(2\textbf{y}_n - e_n \textbf{x}_n)-\textbf{y}_n\right) \\
				\textbf{x}_{n+1}=(2\textbf{J}_{\textbf{A}}- Id) (2\textbf{J}_{\textbf{B}}- Id)\textbf{z}_n,
			\end{array}\right.}
	\end{eqnarray}
	which is of the form Algorithm (\ref{e5.1}) for $\gamma =1.$ From assumption (\ref{P4P6.10}), we have
	$$\operatorname{Zer}(\textbf{A} +\textbf{B})=\operatorname{Zer}(\textbf{W}^{-1}(\textbf{M}+\textbf{S}+\textbf{Q}))=\operatorname{Zer}(\textbf{M}+\textbf{S}+\textbf{Q}).$$
	Applying Theorem \ref{T5.1}, we can find $\bar{\textbf{x}} \in \operatorname{Fix}(R_{\textbf{A}}R_{\textbf{B}})$ such that $\textbf{J}_{\text{\textbf{B}}}{\bar{\textbf{x}}} \in \operatorname{Zer}(\textbf{A}+\textbf{B}).$
\end{proof}

At the end of this section, we study iterative technique to solve  following convex optimization problem:
\begin{Problem}\label{P4P5.33}
	Let $f \in \Gamma(\mathcal{H})$. Consider $\varOmega_i$ are real Hilbert spaces, $g_i,l_i \in \Gamma(\varOmega_i)$ and $L_i : \mathcal{H} \to \varOmega_i$ are linear continuous operators, $i=1,\dots,m$. The optimization problem is given by
	\begin{equation}
		\inf_{x\in \mathcal{H}}\left[f(x)+\sum_{i=1}^{m}(g_i\square l_i)(L_i x)\right]
	\end{equation}
	with conjugate-dual problem is given by
	\begin{equation}
		\sup_{v_i \in \varOmega, i=1,2,\dots,m}\left\lbrace -f^{*}\left( -\sum_{i=1}^{m}L_i^* v_i\right) -\sum_{i=1}^{m}(g_i^*(v_i)+l_i^*(v_i))\right\rbrace.
	\end{equation}
\end{Problem}

Consider stricly positive integers $\tau, \sigma_i, i=1,\dots,m$ and
initial point $(x_1,v_{1,1},\dots,v_{m,1})\in \mathcal{H}\times \varOmega_i \times \cdots \times \varOmega_m.$ The primal-dual algorithm to solve Problem \ref{P4P5.33} is given by

\begin{algorithm}[H]
	\SetAlgoLined
	
	\KwIn{\begin{enumerate}
			\item initial points $(x_1,v_{1,1},\dots,v_{m,1})\in \mathcal{H}\times \varOmega_i \times \cdots \times \varOmega_m.$
			\item \label{P4As5.32}Positive real numbers $\tau, \sigma_i$, $i=1,2,...,m$ be such that $\tau\sum_{i=1}^{m} \sigma_i \|L_i\|^2 < 4 $.
			\item The sequences $\{\theta_n\}, \{e_n\}$ satisfying the assumptions in  Theorem \ref{P4T5.11}.
		\end{enumerate}	
	}
	\SetKwBlock{For}{For}{}
	\SetKwProg{}{}{}{}
	\For($n\in \mathbb{N}$; ){
		$p_{1,n}=prox_{\tau f} (e_n x_n-\frac{\tau}{2}e_n \sum_{i=1}^{m}L_i^{*}v_{i,n} )$\\
		$w_{1,n}=2p_{1,n}-e_n x_n$\\
		
		\For($i=1,\dots, m$; ){
			
			$	p_{2,i,n}=prox_{\sigma_i g_i ^*}(e_n v_{i,n} +\frac{\sigma_i}{2}L_i w_{1,n})$\\
			$	w_{2,i,n}=2p_{2,i,n}-e_n v_{i,n}  $ \\
		}
		
		$z_{1,n}=w_{1,n}-\frac{\tau}{2}\sum_{i=1}^{m}L_i^{*}w_{2,i,n}$\\
		$u_{1,n}=e_n x_n+\theta_{n}(z_{1,n}-p_{1,n})$\\
		
		\For($i=1,\dots, m$; ){
			$z_{2,i,n}=prox_{\sigma_i l_i ^ *}(w_{2,i,n} + \frac{\sigma_i}{2}L_i (2z_{1,n}-w_{1,n}))$\\
			$	u_{2,i,n}=e_n v_{i,n} +\theta_{n}(z_{2,i,n}-p_{2,i,n})$\\
			
		}
		
		$q_{1,n}=prox_{\tau f}(u_{1,n}-\frac{\tau}{2}\sum_{i=1}^{m}L_i^{*}(u_{2,i,n}))$\\
		$s_{1,n}=2q_{1,n}-u_{1,n}$\\
		
		\For($i=1,\dots, m$; ){
			$	q_{2,i,n}=prox_{\sigma_i g_i ^*}(u_{2,i,n}+\frac{\sigma_i}{2}L_i s_{1,n})$\\
			$	s_{2,i,n}=2q_{2,i,n}-u_{2,i,n}$\\
		}
		
		$	t_{1,n}=s_{1,n}-\frac{\tau}{2}\sum_{i=1}^{m}L_i^{*}(s_{2,i,n})$\\
		$	x_{n+1}=2t_{1,n}-s_{1,n}$\\
		\For($i=1,\dots, m$; ){
			$t_{2,i,n}=prox_{\sigma_i l_i ^*}(s_{2,i,n}+\frac{\sigma_i}{2}L_i(x_{n+1}))$\\
			$	v_{2,i,n}=2t_{2,i,n}-s_{2,i,n}$
			
		}
		
	}

	\KwOut{$(x_{n+1},v_{1,n+1},\dots,v_{m,n+1})$}		
	\caption{To optimize the complexly structured monotone inclusion Problem \ref{P4P5.33} }\label{P3Al4.3}
\end{algorithm}	
where $\{\theta_{n}\} \text{ and } \{e_n\}$  are real sequences.

\begin{Corollary}
	In addition to assumptions in Problem \ref{P4P5.33}, consider
	\begin{equation}\label{e6.}
		0\in ran\left( \partial f +\sum_{i=1}^{m}L_i ^* \circ (\partial g_i \square \partial l_i)\circ L_i \right) .
	\end{equation}	
	Then, there exists an element $(\bar{x}, \bar{v}_1,\dots,\bar{v}_m)\in \mathcal{H}\times \varOmega_1 \times \cdots \times \varOmega_m$ such that following statements are true:
	\begin{enumerate}
		\item[(a)] Denote\\
		$\bar{p}_1=prox_{\tau f}\left( \bar{x}-\frac{\tau}{2}\sum_{i=1}^{m}L_i ^* \bar{v}_i\right) $\\
		$\bar{p}_{2,i}=prox_{\sigma_i g^*_i}\left( \bar{v}_i +\frac{\sigma_i}{2}L_i(2\bar{p}_i-\bar{x})\right) , i=1,\dots, m.$
		Then the point $(\bar{p}_1, \bar{p}_{2,1},\dots, \bar{p}_{2,m})\in \mathcal{H} \times \varOmega_1 \times\cdots \times \varOmega_m $ is a primal-dual solution of Problem \ref{P4P5.33}.
		\item[(b)] $\{(x_n,v_{1,n}, \dots, v_{m,n})\}$ converges strongly to $(\bar{x},\bar{v}_1,\dots, \bar{v}_m)$.
		\item[(c)] $\{(p_{1,n}, p_{2,1,n},\dots, p_{2,m,n})\}$ and $\{(z_{1,n}, z_{2,1,n},\dots, z_{2,m,n})\}$ converge strongly to $(\bar{p}_1, \bar{p}_{2,1},\dots, \bar{p}_{2,m})$.
	\end{enumerate}
\end{Corollary}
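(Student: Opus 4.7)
The plan is to reduce the corollary to Theorem~\ref{P4T5.11} by the substitution $A = \partial f$, $P_i = \partial g_i$, $Q_i = \partial l_i$ for $i=1,\dots,m$. Since $f,g_i,l_i \in \Gamma(\mathcal{H})$ (respectively $\Gamma(\Omega_i)$), each subdifferential is maximally monotone, so the operator-level hypotheses of Problem~\ref{P4P6.2} are satisfied. The linear operators $L_i$ and the positive parameters $\tau,\sigma_i$ (satisfying $\tau\sum_{i=1}^m \sigma_i\|L_i\|^2 < 4$) carry over unchanged, as do the control sequences $\{\theta_n\}$ and $\{e_n\}$.

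Next I would check that Algorithm~\ref{P3Al4.3} is literally Algorithm~\ref{P3Al4.1} after this substitution. The only points to verify are the resolvent identities $J_{\tau \partial f} = \mathrm{prox}_{\tau f}$ and, using $(\partial g_i)^{-1} = \partial g_i^{*}$ and $(\partial l_i)^{-1} = \partial l_i^{*}$ valid for functions in $\Gamma$, $J_{\sigma_i (\partial g_i)^{-1}} = \mathrm{prox}_{\sigma_i g_i^{*}}$ and $J_{\sigma_i (\partial l_i)^{-1}} = \mathrm{prox}_{\sigma_i l_i^{*}}$. With these replacements every line of Algorithm~\ref{P3Al4.3} matches the corresponding line of Algorithm~\ref{P3Al4.1}.

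It remains to translate the range condition (\ref{e6.}) into (\ref{P4P6.10}). Using $\partial(g_i \square l_i) = \partial g_i \square \partial l_i$ and the sum rule for subdifferentials that is available when (\ref{e6.}) places $0$ in the range of the composite operator, we identify
\[
\partial f + \sum_{i=1}^{m} L_i^{*} \circ (\partial g_i \square \partial l_i) \circ L_i \subseteq A + \sum_{i=1}^{m} L_i^{*} \circ (P_i \square Q_i) \circ L_i,
\]
so (\ref{e6.}) implies (\ref{P4P6.10}) under the substitution. Hence all hypotheses of Theorem~\ref{P4T5.11} hold, and its conclusions give at once the strong convergence statements (b) and (c). For (a), noting $\bar p_1 = J_{\tau A}(\bar x - \tfrac{\tau}{2}\sum_i L_i^{*}\bar v_i) = \mathrm{prox}_{\tau f}(\bar x - \tfrac{\tau}{2}\sum_i L_i^{*}\bar v_i)$ and similarly for $\bar p_{2,i}$, the primal--dual optimality for Problem~\ref{P4P6.2} read through the Fermat rule $0 \in \partial f(\bar p_1) + \sum_i L_i^{*}\bar p_{2,i}$ and the inclusions $\bar p_{2,i} \in (\partial g_i \square \partial l_i)(L_i \bar p_1)$ together with the Fenchel--Young equality yield precisely the optimality conditions for the primal and Fenchel--dual problems of Problem~\ref{P4P5.33}.

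The main obstacle is purely bookkeeping: ensuring that the sum rule $\partial(g \square l) = \partial g \square \partial l$ and the interchange of $\partial$ with the linear composition $L_i^{*} \circ (\cdot) \circ L_i$ are justified under the constraint qualification encoded in (\ref{e6.}), and that the resulting primal--dual pair is indeed a saddle point of the Lagrangian associated with Problem~\ref{P4P5.33} rather than merely a zero of the monotone inclusion. Once this dictionary is in place, the proof is a direct invocation of Theorem~\ref{P4T5.11}.
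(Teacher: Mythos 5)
Your proposal is correct and follows exactly the route the paper intends: the paper states this corollary without proof, leaving implicit precisely the specialization $A=\partial f$, $P_i=\partial g_i$, $Q_i=\partial l_i$ of Theorem~\ref{P4T5.11}, with $J_{\tau\partial f}=prox_{\tau f}$ and $J_{\sigma_i(\partial g_i)^{-1}}=prox_{\sigma_i g_i^{*}}$ turning Algorithm~\ref{P3Al4.1} into Algorithm~\ref{P3Al4.3} (the same reduction the paper spells out for the forward--backward analogue in Theorem~\ref{c4.1}). Your added care about the inclusion $\partial g_i\,\square\,\partial l_i\subseteq\partial(g_i\,\square\,l_i)$ and the Fenchel--Young step for part (a) is sound and, if anything, more explicit than what the paper records.
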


\section{Numerical Experiment}
In this section, we make an experimental setup to solve the wavelet based image deblurring problem. In image deblurring, we develop mathematical methods to recover the original, sharp image from blurred image. The mathematical formulation of the blurring process can be written as linear inverse problem,
\begin{equation} \label{p4n1}
	\text{ find } x \in \mathbb{R}^d \text{ such that }	Ax=b+w
\end{equation}
where $A\in \mathbb{R}^{m\times d}$ is a blurring operator, $b\in \mathbb{R}^m$ is blurred image and $w$ is an unknown noise.
A classical approach to solve Problem (\ref{p4n1}) is to minimize the least-square term
$\|Ax-b\|^2$. In the deblurring case, the problem is ill-conditioned as the norm solution has a huge norm. To remove the difficulty, the ill-conditioned problem is replaced by a nearly well-conditioned problem. In wavelet domain, most images are sparse in nature, that's why we choose $l_1$ regularization. For $l_1$ regularization, the image processing problem becomes
\begin{equation}\label{p4n2}
	\min_{x\in\mathbb{R}^2}F(x)=	\|Ax-b\|^2 +\lambda \|x\|_1,
\end{equation}
where $\lambda$ is a sparsity controlling parameter and  provides a tradeoff between fidelity to the measurements and noise sensitivity. The $l_1$ regularization produces sparse images having sharp edges since it is less sensitive to outliers.
Using subdifferential characterization of the minimum of a convex function, a point $x^*$ minimizes $F(x)$ if and only if
\begin{equation}\label{p4n3}
	0\in A^T(Ax^*-b)+\partial \lambda \|x^*\|_1.\nonumber
\end{equation}
Thus we can apply forward-backward Algorithm (\ref{e4.1}) with $A_1=A_2=A^T(Ax^*-b)$ and $B_1=B_2=\partial \lambda \|x^*\|_1$ to solve the deblurring problem (\ref{p4n2}).

For numerical experiment purposes, we have chosen images from publicly available domain and assumed reflexive (Neumann) boundary conditions. We blurred the images using gaussian blur of size $9 \times 9$ and standard deviation 4. We have compared the algorithm (\ref{e4.1}) with \cite[Algorithm 8]{Bot2019}. The operator $A=RW$, where $W$ is the three stage haar wavelet transform and $R$ is the blur operator. The original and corresponding blurred images were shown in Figure \ref{Fig6.5}.
\begin{figure}[htb!]
	\begin{subfigure}[b]{0.48\textwidth}
		\includegraphics[width=\linewidth]{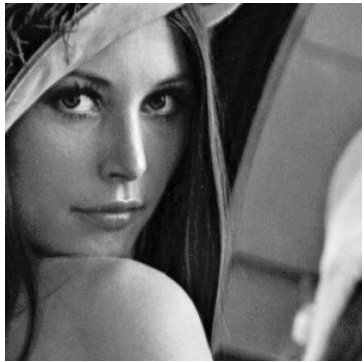}
		\caption{Original.}
		\label{Fig6.1}
	\end{subfigure}
	\hfill
	\begin{subfigure}[b]{0.48\textwidth}
		\includegraphics[width=\linewidth]{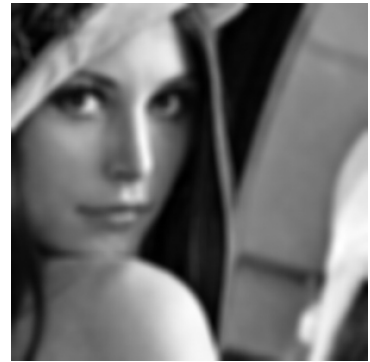}
		\caption{Blurred}
		\label{Fig6.2}
	\end{subfigure}
	\hfill
	\begin{subfigure}[b]{0.48\textwidth}
		\includegraphics[width=\linewidth]{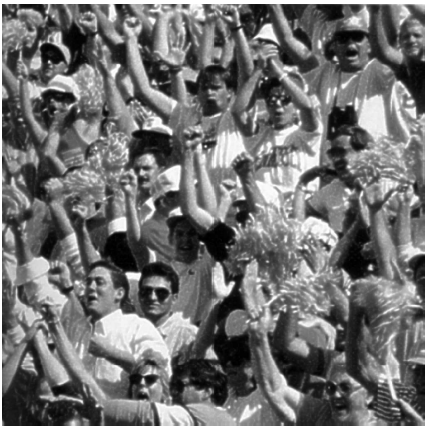}
		\caption{Original}
		\label{Fig6.3}
	\end{subfigure}
	\hfill
	\begin{subfigure}[b]{0.48\textwidth}
		\includegraphics[width=\linewidth]{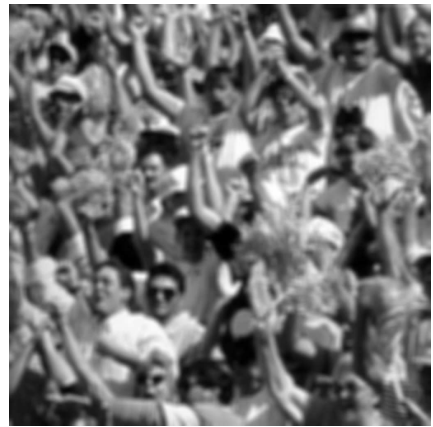}
		\caption{Blurred}
		\label{Fig6.4}
	\end{subfigure}
	\caption{The original and blurred images of Lenna and crowd. Peak signal to noise ratio (PSNR) of image (b) is 28.3111 and of image (d) is 21.3171}
	\label{Fig6.5}
\end{figure}
The regularization parameter was chosen to be $\lambda=2\times10^{-5}$, and the initial image was the blurred image. The objective function value is denoted by $F(x^*)$ and function value at $n^{\text{th}}$ iteration is denoted by $F(x_n)$. Sequences $\{e_n\}$ and $\{\theta_n\}$ are chosen as $\{1-\frac{1}{n+1}\}$ and $\{0.9\}$ respectively. The images recovered by the algorithms for 1000 iterations are shown in the figure.
The graphical representation of convergence of $F(x_n) - F(x^*)$ is depicted in Figure \ref{Fig6.8}.
\begin{figure}[htb!]
	\begin{subfigure}[b]{0.48\textwidth}
		\includegraphics[width=\linewidth]{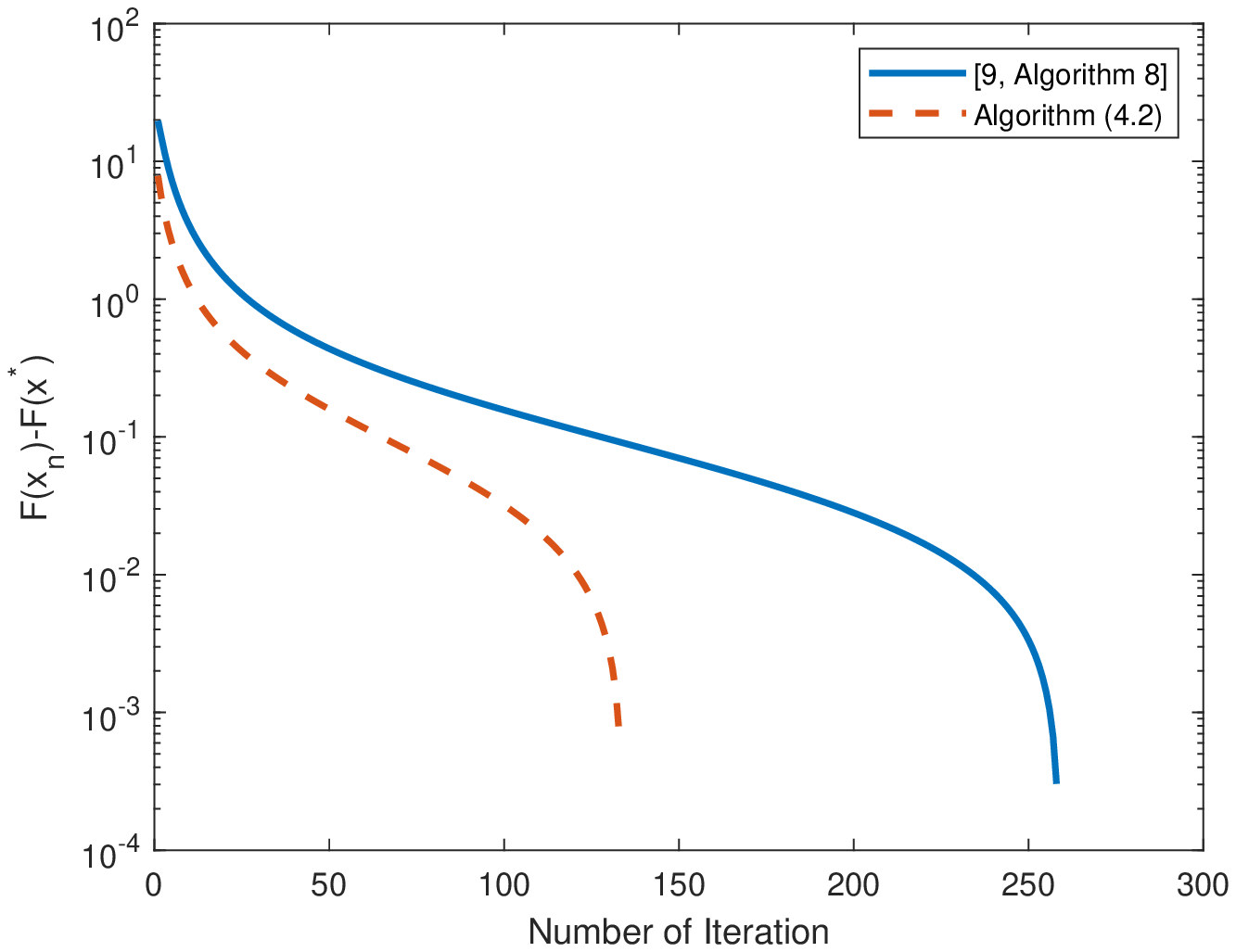}
		\caption{Lenna.}
		\label{Fig6.6}
	\end{subfigure}
	\hfill
	\begin{subfigure}[b]{0.48\textwidth}
		\includegraphics[width=\linewidth]{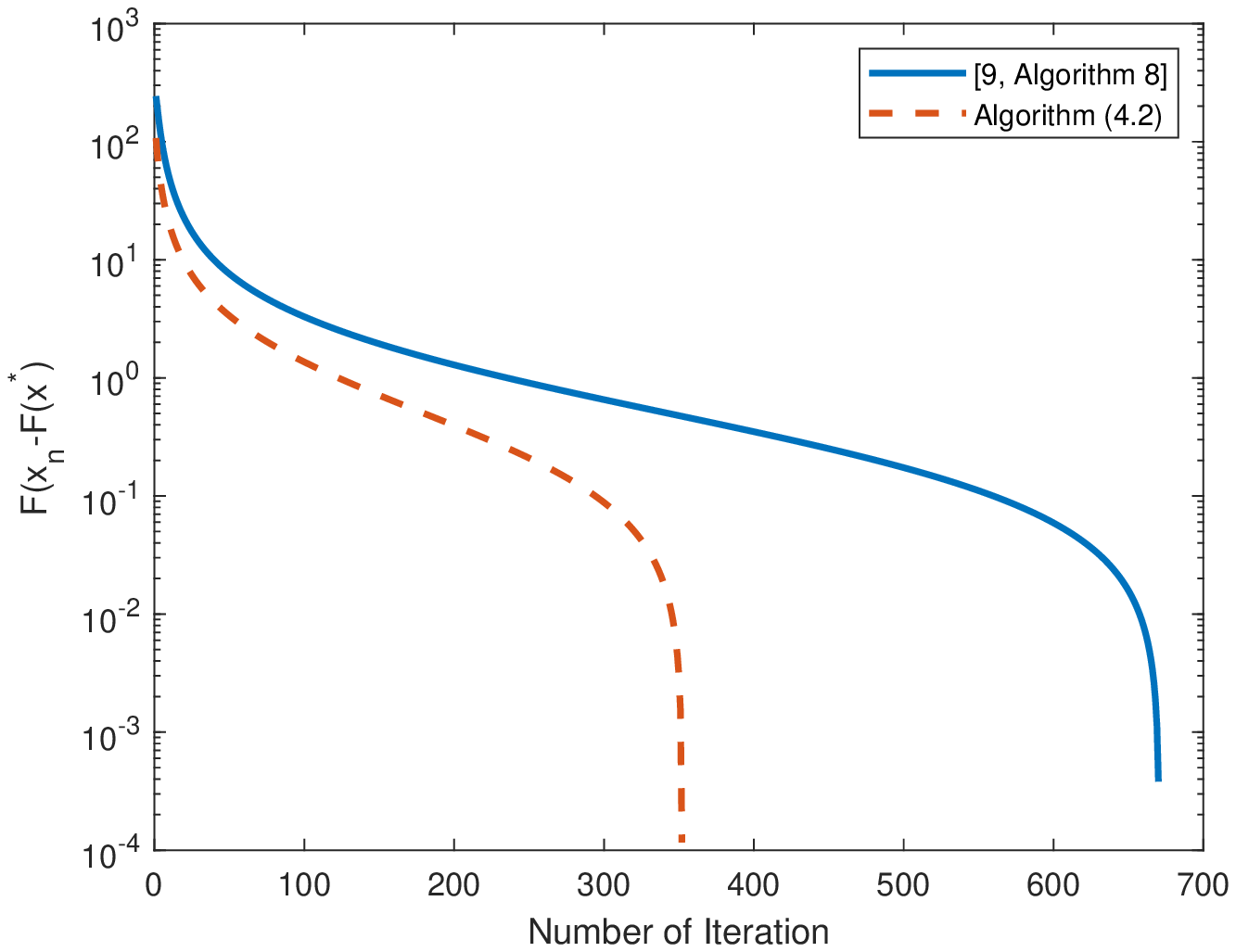}
		\caption{Crowd}
		\label{Fig6.7}
	\end{subfigure}
	\caption{The variation of $F(x_n)-F(x^*)$ with respect to number of iteration for different images.}
	\label{Fig6.8}
\end{figure}
For deblurring methods, lower the value of $F(x_n)-F(x^*)$ higher the quality of recovered images.
\begin{figure}[htb!]
	\begin{subfigure}[b]{0.48\textwidth}
		\includegraphics[width=\linewidth]{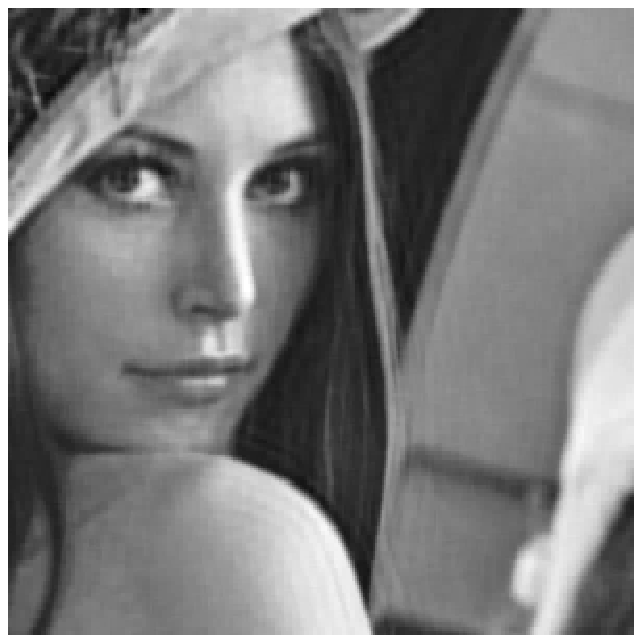}
		\caption{ Algorithm (\ref{e4.1}). PSNR = 34.8448.}
		\label{Fig6.9}
	\end{subfigure}
	\hfill
	\begin{subfigure}[b]{0.48\textwidth}
		\includegraphics[width=\linewidth]{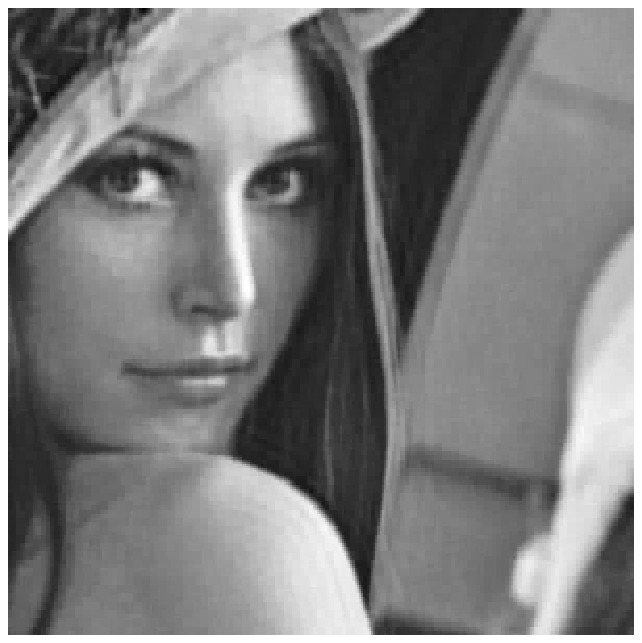}
		\caption{\cite[Algorithm 8]{Bot2019}. PSNR = 34.5123} 
		\label{Fig6.10}
	\end{subfigure}\hfill
	\begin{subfigure}[b]{0.48\textwidth}
		\includegraphics[width=\linewidth]{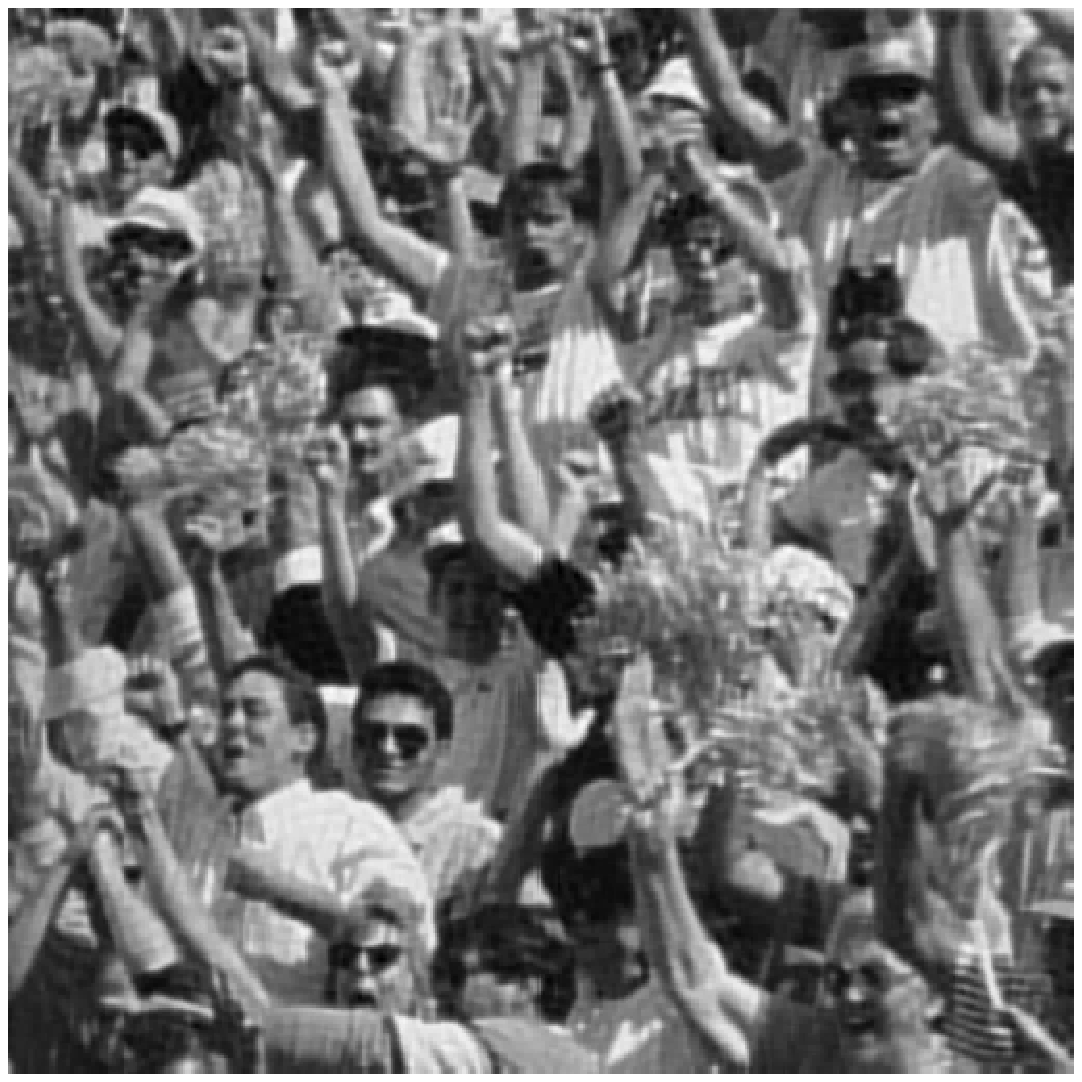}
		\caption{ Algorithm (\ref{e4.1}). PSNR = 29.3581}
		\label{Fig6.11}
	\end{subfigure}
	\hfill
	\begin{subfigure}[b]{0.48\textwidth}
		\includegraphics[width=\linewidth]{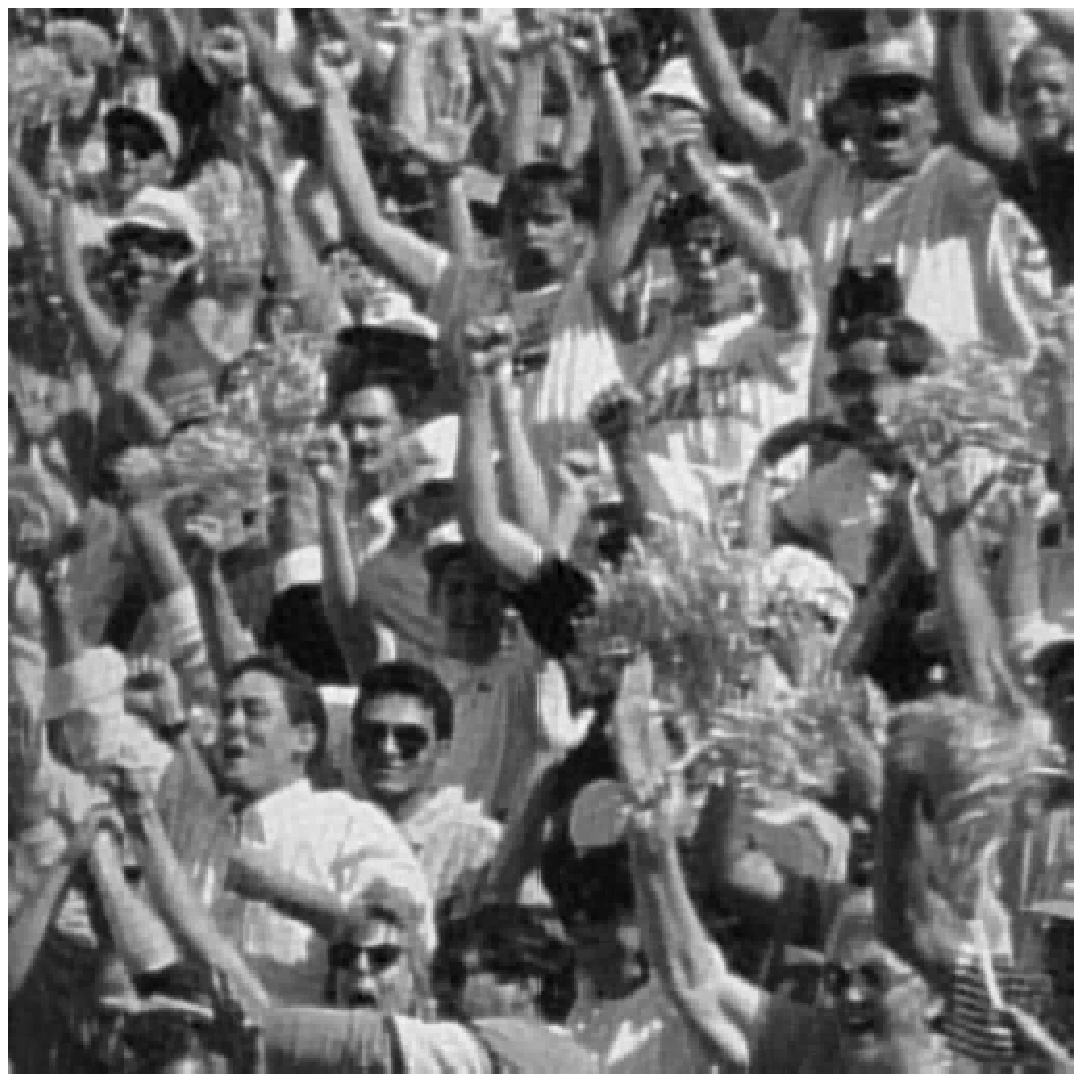}
		\caption{\cite[Algorithm 8]{Bot2019}. PSNR = 28.5764}
		\label{Fig6.12}
	\end{subfigure}
	\caption{The recovered images using different algorithms for 1000 iterations.}
	\label{Fig6.13}
\end{figure}

\begin{figure}[htb!]
	\begin{subfigure}[b]{0.48\textwidth}
		\includegraphics[width=\linewidth]{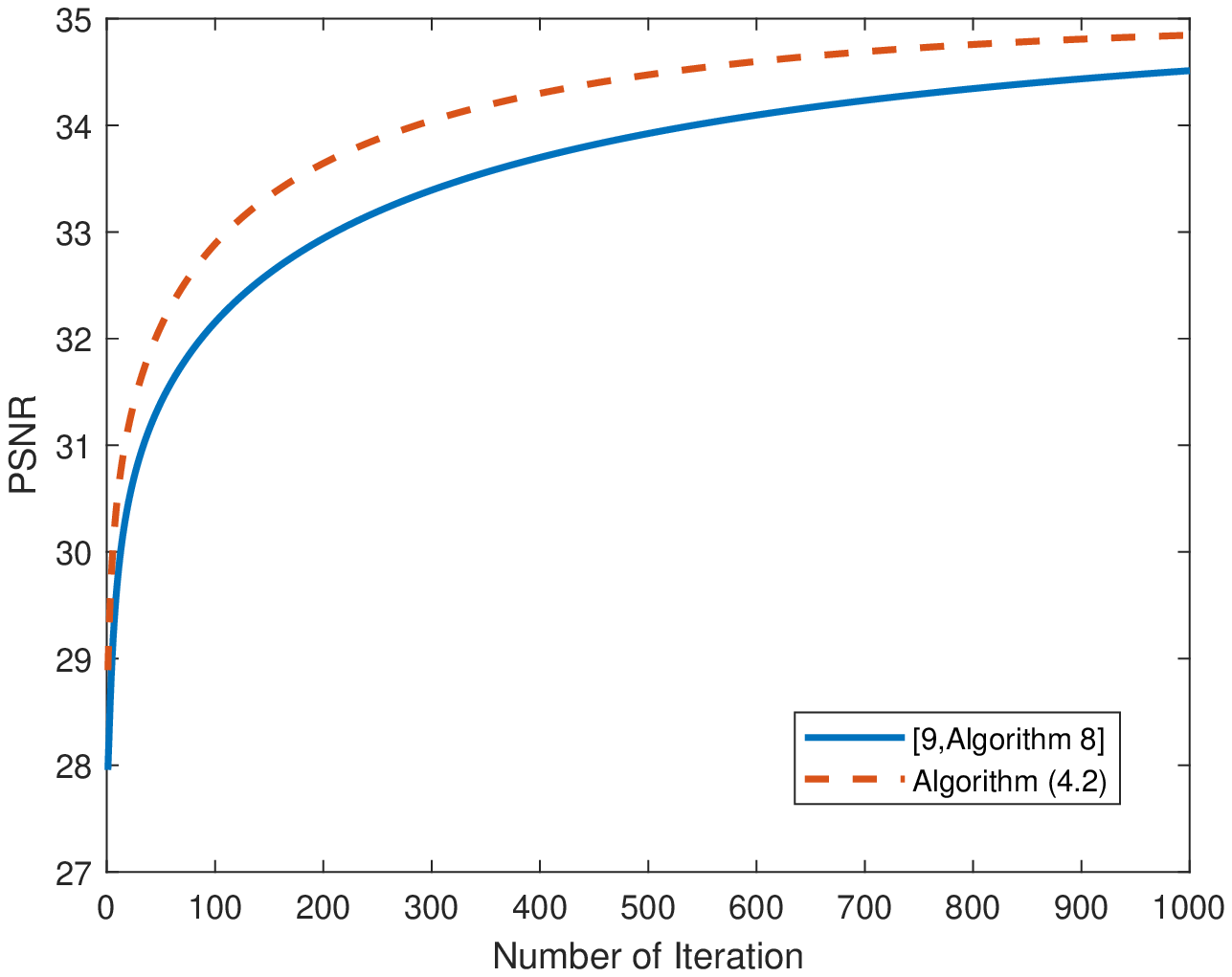}
		\caption{Lenna.}
		\label{Fig6.16}
	\end{subfigure}
	\hfill
	\begin{subfigure}[b]{0.48\textwidth}
		\includegraphics[width=\linewidth]{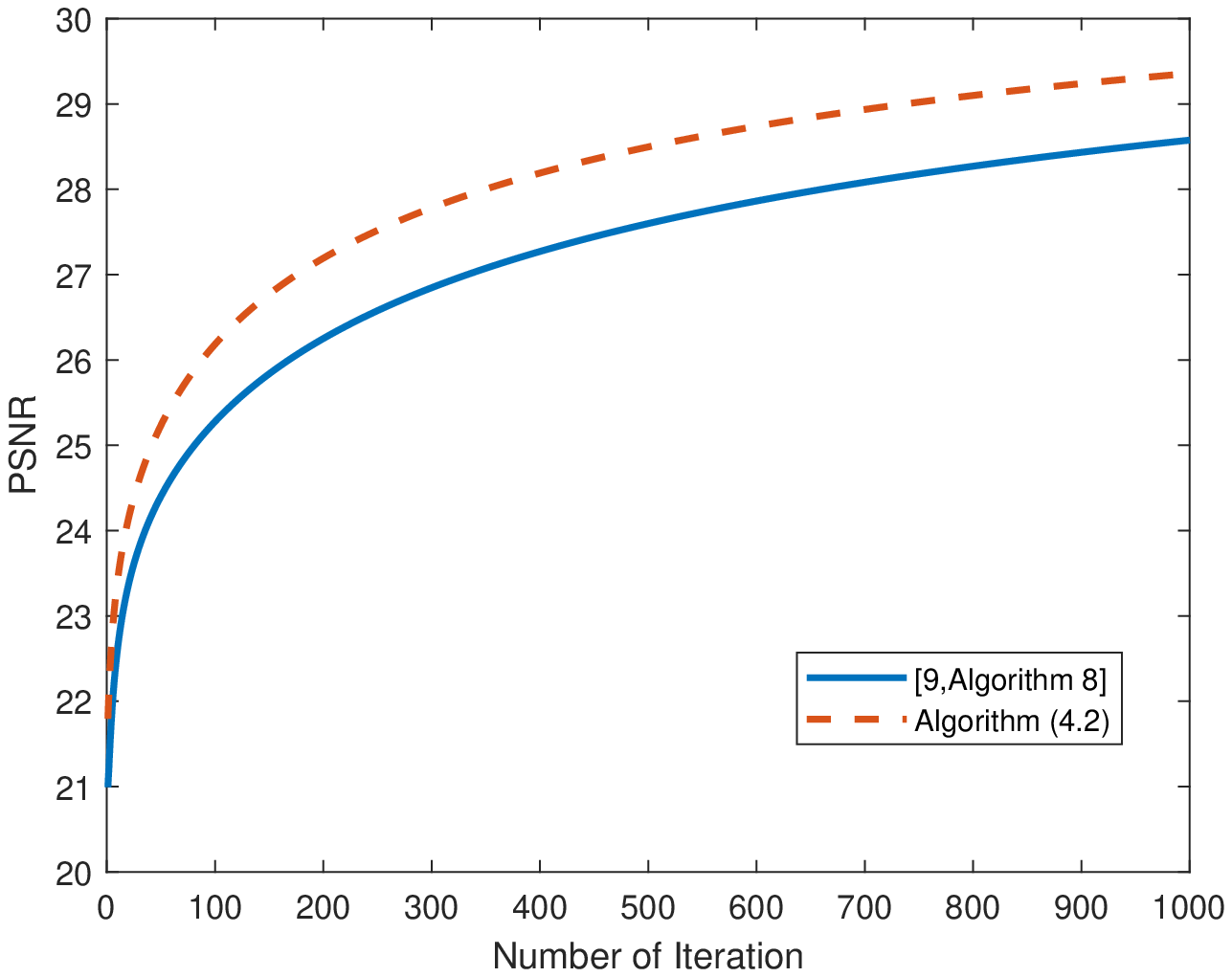}
		\caption{Crowd}
		\label{Fig6.17}
	\end{subfigure}
	\caption{The variation of PSNR value of recovered image at each iteration with original image as a reference image.}
	\label{Fig6.18}
\end{figure}
It can be observed from Figures \ref{Fig6.8} and \ref{Fig6.13}  that proposed Algorithm (\ref{e4.1}) outperforms over \cite[Algorithm 8]{Bot2019}. This can also be confirmed from the peak signal to noise ratio (PSNR) value of the recovered images and the fact that higher the PSNR value, better the image quality. The variation of PSNR value of recovered image at each iteration with original image as a reference is also plotted in Figure \ref{Fig6.18}. 
%


\section*{Acknowledgements}
The first author acknowledges the Indian Institute of Technology (BHU), Varanasi for Senior Research Fellowship in terms of teaching assistantship. The third author is thankful to University Grant Commission for the Senior Research Fellowship.

\end{document}